\newtheorem{theorem}{Theorem}[section]
\newtheorem{proposition}[theorem]{Proposition}
\numberwithin{equation}{section}
\begin{document}
\title[Semigroups associated with the Laplace operator on homogeneous trees]{Pointwise convergence of the heat and subordinates of the heat semigroups associated with the Laplace operator on homogeneous trees and two weighted $L^p$ maximal inequalities.}
\author[Isaac Alvarez-Romero]{I. Alvarez-Romero $^{\dag,1}$}
\author[Bego\~na Barrios]{B. Barrios$^{\ddag,2}$}
\author[Jorge. J. Betancor]{J. J. Betancor$^{\ddag,3}$}

\address[$\dag$]{Departamento de Matem\'aticas, Universidad de Las Palmas de Gran Canaria, Edificio de Inform\'atica y Matem\'aticas, Campus de Tafira, 35017, Tafira Baja, Las Palmas de Gran Canaria, Spain.}
\address[$\ddag$]{Departamento de An\'alisis Matem\'atico, Universidad de La Laguna C/. Astrof\'isico Francisco S\'anchez s/n, 38200 - La Laguna, Spain.}
\email[$1$]{isaac.alvarez@ulpgc.es}
\email[$2$]{bbarrios@ull.es}
\email[$3$]{jobetanco@ull.es}
\date{\today}
\maketitle
\begin{abstract}
In this paper we consider the heat semigroup $\{W_t\}_{t>0}$ defined by the combinatorial Laplacian and two subordinated families of $\{W_t\}_{t>0}$ on homogeneous trees $X$. We characterize the weights $u$ on $X$ for which the pointwise convergence to initial data of the above families holds for every $f\in L^{p}(X,\mu,u)$ with $1\le p<\infty$, where $\mu$ represents the counting measure in $X$ . We prove that this convergence property in $X$ is equivalent to the fact that the maximal operator on $t\in (0,R)$, for some $R>0$, defined by the semigroup is bounded from $L^{p}(X,\mu,u)$ into $L^{p}(X,\mu,v)$ for some weight $v$ on $X$.
\end{abstract}


\setcounter{equation}{0}
\section{Introduction}
Along this work we will denote by $X$, a homogeneous tree of degree $q+1$, that is, a connected
graph without loops, in which every vertex has $q+1$ neighbors, with $q\in\mathbb{N}$, $q\geq 1$. We consider on $X$ the natural distance $d$, that is, if $x,\, y\in X$, $x\neq y$, $d(x,y)$ is the number of edges between the vertexes $x$ and $y$ and $d(x,x)=0$, $x\in X$. By $\mu$ we represent the counting measure on $X$. Note that $\mu$ is not doubling because $q\geq 1$. The canonical, also called combinatorial, Laplace operator on $X$ is defined by
\begin{equation}\label{Laplac}
\mathcal{L}(f)(x)=f(x)-\frac{1}{q+1}\sum_{y\in X,\, d(x,y)=1} f(y),\quad x\in X,
\end{equation}
where $f$ is a complex function defined on $X$. When $q=1$ we clearly get that the homogeneous tree $X$ is equal to $\mathbb{Z}$ and $(\mathbb{Z}, d, \mu)$ is a homogeneous type space in the sense of Coifman and Weiss (\cite{CW}). This fact leads to use all the tools of the harmonic analysis in this framework, in particular the standard Calder\'on-Zygmund theory works in this space. We also observe that in this case
$$\mathcal{L}(f)(x)\left(=-\Delta_1(f)(x)\right)=-\frac{1}{2}(f(x+1)-2f(x)+f(x-1)),\quad x\in\mathbb{Z},$$
and the harmonic analysis associated with $\Delta_1$ on $\mathbb{Z}$
has been developed in the last years by several authors (see, for
instance \cite{AdL, AGMP, CGRTV, CRSTV, GKLW, LR} and the references
therein). The operator $\mathcal{L}$ is bounded in $L^{p}(X,\mu)$
for every $1\leq p\leq\infty$ and it is selfadjoint in
$L^{2}(X,\mu)$. Hence $-\mathcal{L}$ generates a $C_0$- semigroup
$\{W_t\}_{t\geq 0}:=\{e^{-t\mathcal{L}}\}_{t\geq 0}$ of operators in
$L^{p}(X,\mu)$ for every $1\leq p<\infty$. This semigroup
$\{W_t\}_{t\geq 0}$ is usually called the heat semigroup associated
with the operator $\mathcal{L}$ because for every $f\in
L^{p}(X,\mu)$, $1\leq p<\infty$, the function $u(t,x):=W_t(f)(x)$,
$t>0$ and $x\in X$, solves the initial value problem
\begin{equation}\label{uno}
\left\{
\begin{array}{ll}
\left(\frac{\partial}{\partial t}+\mathcal{L}\right)u(t,x)=0,\quad t>0,\, x\in X,\\
u(x,0)=f(x),\quad x\in X.
\end{array}
\right.
\end{equation}
We also observe that, for every $t>0$, the operator $W_t$ is invariant under the action of the group of isometries of $(X,d)$, so that, $W_t$ is a convolution operator given by
\begin{equation}\label{calor}
W_t(f)(x)=\int_{X} h_t(x,y)f(y)d\mu(y)=\sum_{y\in X} h_t(x,y)f(y),\quad x\in X.
\end{equation}
The function $h_t$, $t>0$ is usually called the heat kernel associated to $\mathcal{L}$ and we have that $h_t(x,y)=H_t(d(x,y))$, that is, $h_t(x,y)$ depends actually on $d(x,y)$, $x,\, y\in X$. According to \cite{GJK} we have that, for every $t>0$,
$$
H_t(k)=\frac{2e^{-(q+1)t}
}{\pi q^{k/2-1}}\int_0^\pi \frac{e^{2t\sqrt{q}cos(u)}sin(u)(qsin((k+1)u)-sin((k-1)u))}{(q+1)^2-4qcos^2(u)}du,\quad k\in \mathbb{N}\setminus\{0\},
$$
and
$$
H_t(0)=\frac{2q(q+1)e^{-(q+1)t}}{\pi}\int_0^\pi \frac{e^{2t\sqrt{q}cos(u)}sin^2(u)}{(q+1)^2-4qcos^2(u)}du.
$$
\bigskip

We also consider other semigroups of operators defined subordinating with respect to $\{W_t\}_{t>0}.$ For that let us take $0<\alpha<2$ and we define, for every $t>0$,
$$f_{\alpha, t}(s):=
\left\{
\begin{array}{ll}
\displaystyle \frac{1}{2\pi\, i}\int_{a-i\infty}^{a+i\infty}e^{zs-tz^{\frac{\alpha}{2}}}\, dz,\quad s\geq 0,\, a>0,\\
0,\quad s< 0.
\end{array}
\right.
$$
Note that the last integral does not depend on $a>0$. As in \cite[Chapter IX,11]{Yo}, we also define, for every $t>0$ and $f\in L^{p}(X,\mu),\, 1\leq p\leq\infty$,
\begin{align}\label{poisson}
P_t^{\alpha}(f)(x)&:=\int_{0}^{\infty} f_{\alpha,t}(s)W_s(f)(x)\,ds\\
&= \sum_{y\in X}P_t^\alpha(d(x,y))f(y),\quad x\in X\nonumber,
\end{align}
where
$$
P_t^\alpha(k)=\int_0^\infty f_{\alpha,t}(s)H_s(k)ds,\quad k\in \mathbb{N}.
$$
Thus, for every $1\leq p<\infty$, $\{P_t^{\alpha}\}_{t\geq 0}$ is a $C_0$-semigroup of operators on $L^{p}(X,\mu)$ whose infinitesimal generator is $\mathcal{L}^{\alpha/2}$  (\cite[Theorem 2]{Yo}). Then, for every $f\in L^p(X,\mu)$, the function $v(t,x):=P_t^{\alpha}(f)(x)$, $t>0$, $x\in X$, solves the initial value problem
\begin{equation}\label{dos}
\left\{
\begin{array}{ll}
\left(\frac{\partial}{\partial t}+\mathcal{L}^{\frac{\alpha}{2}}\right)v(t,x)=0,\quad t>0,\, x\in X,\\
v(x,0)=f(x),\quad x\in X.
\end{array}
\right.
\end{equation}
Here $\mathcal{L}^{\frac{\alpha}{2}}$ denotes the $\frac{\alpha}{2}$ power of the operator $\mathcal{L}$ that can be defined, for every $x\in X$, by (see \cite[(5), p. 260]{Yo})
$$\mathcal{L}^{\frac{\alpha}{2}} f(x):=\frac{1}{\Gamma(\frac{\alpha}{2})}\int_{0}^{\infty}{\frac{W_t(f)(x)-f(x)}{t^{1+\frac{\alpha}{2}}}}\, dt,\quad f\in L^p(X,\mu),$$
for every $1\le p<\infty$. Since we have that
$$f_{1, t}(s)=\frac{t}{2\sqrt{\pi}}s^{-3/2}e^{-t^2/4s},\, t,\, s\in (0,\infty),$$
then $\{P_t^1\}_{t\geq 0}$ is the so called Poisson semigroup associated to $\mathcal{L}$. When $\alpha\neq 1$ we do not know a non integral form for the function $f_{\alpha, t}$.

\bigskip
The last subordinate family that we will consider is the following one that was introduced in \cite{ST}. Let $\nu>0$. For every $t>0$ and  $f\in L^p(X,\mu)$, $1\le p\le\infty$, we define
\begin{align}\label{onda}
T_t^{\nu}(f)(x)&:=\frac{t^{2\nu}}{4^{\nu}\Gamma(\nu)}\int_{0}^{\infty} e^{-t^2/4s}W_s(f)(x)\frac{ds}{s^{1+\nu}}\nonumber\\
&=\sum_{y\in X}T_t^\nu(d(x,y))f(y),\quad x\in X,
\end{align}
where
$$
T_t^\nu (k)=\frac{t^{2\nu}}{4^{\nu}\Gamma(\nu)}\int_{0}^{\infty} e^{-t^2/4s}H_s(k)\frac{ds}{s^{1+\nu}},\quad k\in \mathbb{N}.
$$
 As before the family $\{T_t^{\nu}\}_{t\geq 0}$ is a $C_0$- semigroup in $L^{p}(X,\mu)$ when $\nu=1/2$, and, for every $f\in L^{p}(X,\mu)$ the function $w(t,x):=T_t^{\nu}(f)(x)$, $t>0$, $x\in X$, solves the initial value problem
\begin{equation}\label{tres}
\left\{
\begin{array}{ll}
\left(\frac{\partial^2}{\partial t^2}+\frac{1-2\nu}{t}\frac{\partial}{\partial t}+\mathcal{L}\right)w(t,x)=0,\quad t>0,\, x\in X,\\
w(x,0)=f(x),\quad x\in X.
\end{array}
\right.
\end{equation}
We note that $T_t^{1/2}=P_t^{1}$ for every $t>0$.

\bigskip

We can consider \cite{CMS} as the seminal paper in the development of the harmonic analysis associated with the operator $\mathcal{L}$ in homogeneous trees. In this work Cowling, Meda and Setti established some $L^p-L^q$ mapping properties for the heat semigroup defined by $\mathcal{L}$. They also proved $L^p$ boundedness properties for the maximal and Littlewood-Paley functions associated with $\{W_t\}_{t>0}$. The corresponding properties for the Poisson semigroup $\{P_t^{1/2}\}_{t>0}$ were established in \cite{Se}. In the last years the study of harmonic analysis in the homogeneous trees has turned to take great interest. Heat and Poisson semigroups (\cite{KR} and \cite{Sto}), Poincar\'e and Hardy inequalities \cite{BSV}, Hardy and BMO spaces (\cite{ATV1, ATV2, CM}), nondoubling flow measures (\cite{LSTV1, LSTV2}), uncertainty principles (\cite{FJ}), Carleson measures (\cite{CCS}), special multipliers (\cite{CMW}) and maximal functions (\cite{CMS2, GR, NT, ORS}) are some of the topics that have being recently studied in this setting.

\bigskip
For now on we denote $\{S_t\}_{t>0}$ one of the three uniparametric families that we have introduced, that is, $\{W_t\}_{t>0}$, $\{P_t^{\alpha}\}_{t>0}$, $\alpha\in (0,2)$, and $\{T^{\nu}_t\}_{t>0}$, $\nu>0$. It is well known that if $f\in L^{p}(X,\mu)$, $1\leq p<\infty$ then
\begin{equation}\label{star}
\lim_{t\to 0^+} S_t(f)(x)=f(x),\, x\in X.
\end{equation}
Motivated by the results in \cite{GHSTV, HTV} the main objective in this work is to find an optimal weighted Lebesgue space for which \eqref{star} still holds. That is, we want to find the weights $w\in X$, that is, functions $w:X\to (0,\infty)$, such that \eqref{star} is satisfied for every $f\in L^{p}(X,\mu,w)$, $1\leq p<\infty$. Here $L^{p}(X,\mu,w)$ denotes the usual weighted Lebesgue space. The characterization of these weights are related with the maximal function associated to each of the families $\{W_t\}_{t>0}$, $\{P_t^{\alpha}\}_{t>0}$, $\alpha\in (0,2)$ and $\{T^{\nu}_t\}_{t>0}$, $\nu>0$. In fact we have the next result.
\begin{theorem}\label{Teo1}
Let $1\le p<\infty$, $\alpha\in(0,2)$, and $q\in\mathbb{N}$, $q\geq 1$. Assume that $u$ is a weight on $X$. The following assertions are equivalent:
\begin{itemize}
\item [a)] There exist $R>0$ and a weight $v$ on $X$ such that the operator
$$P_{*,R}^{\alpha}(f):=\sup_{0<t<R}|P_t^{\alpha}(f)|,$$
is bounded from $l^{p}(X,\mu,u)$ into $l^{p}(X,\mu,v)$.
\item [b)]{There exist $R>0$ and a weight $v$ on $X$ such that the operator $P_{\star,R}^{\alpha}$ is bounded from  $l^{p}(X,\mu,u)$ into $l^{p,\infty}(X,\mu,v)$.}
\item [c)]{For every $f\in l^{p}(X,\mu,u)$, $\lim_{t\to 0^+} P_{t}^{\alpha}(f)(x)=f(x)$, $x\in X$.}
\item [d)]{For every $f\in l^{p}(X,\mu,u)$, there exists $x\in X$ such that $\lim_{t\to 0^+} P_{t}^{\alpha}(f)(x)=f(x)$.}
\item [e)]{There exists $R>0$ such that $P_{*,R}^{\alpha}(f)(x)<\infty,\, x\in X$ for every $f\in l^{p}(X,\mu,u)$.}
\item [f)]{There exists $R>0$ such that $P_{R}^{\alpha}(f)(x)<\infty,\, x\in X$ for every $f\in l^{p}(X,\mu,u)$.}
\item [g)] There exists $R>0$ such that, when $1<p<\infty$,
$$\sum_{y\in X}u(y)^{-p'/p}P_{R}^{\alpha}(d(x,y))^{p'}<\infty,\,\,\, x\in X,$$
and, when $p=1$,
$$
\sup_{y\in X}P_R^\alpha(d(x,y))u(y)^{-1}<\infty, \quad x\in X.
$$
\item [h)] There exist $R>0$ and $x\in X$ satisfying, when $1<p<\infty$,
$$\sum_{y\in X} u(y)^{-p'/p}P_{R}^{\alpha}(d(x,y))^{p'}<\infty,$$
and, when $p=1$,
$$
\sup_{y\in X}P_R^\alpha(d(x,y))u(y)^{-1}<\infty.
$$
\item [i)] There exists $x\in X$ such that, when $1<p<\infty$,
$$\sum_{y\in X}\left(q^{d(x,y)}(1+d(x,y))^{1+\frac{\alpha}{2}}\right)^{-p'}u(y)^{-p'/p}<\infty,$$
and, when $p=1$,
$$
\sup_{y\in X}(q^{d(x,y)}(1+d(x,y))^{1+\frac{\alpha}{2}}u(y))^{-1}<\infty.
$$
\end{itemize}
\end{theorem}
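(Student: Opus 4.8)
The argument splits into a hard analytic core --- sharp size estimates for the kernels --- and a soft part. I would first isolate the following lemma: for every $R>0$ there exist constants $0<c_R\le C_R<\infty$ such that
\[
c_R\,\bigl(q^{k}(1+k)^{1+\frac{\alpha}{2}}\bigr)^{-1}\ \le\ P_R^{\alpha}(k)\ \le\ \sup_{0<t<R}P_t^{\alpha}(k)\ \le\ C_R\,\bigl(q^{k}(1+k)^{1+\frac{\alpha}{2}}\bigr)^{-1},\qquad k\in\mathbb{N}.
\]
To prove it, insert the known two-sided bounds for the heat kernel $H_s(k)$ into the subordination formula $P_t^{\alpha}(k)=\int_0^{\infty}f_{\alpha,t}(s)H_s(k)\,ds$, use the scaling identity $f_{\alpha,t}(s)=t^{-2/\alpha}f_{\alpha,1}(st^{-2/\alpha})$ and the classical asymptotics of the one-sided $\tfrac{\alpha}{2}$-stable density (namely $f_{\alpha,1}(\sigma)\asymp\sigma^{-1-\alpha/2}$ as $\sigma\to\infty$, with superexponential decay as $\sigma\to0^{+}$), and estimate the resulting integral by splitting the $s$-range according to where $H_s(k)$ concentrates; the maximal envelope is controlled in the same way, the point being that $\sup_{0<t<R}P_t^{\alpha}(k)$ is comparable to the endpoint value $P_R^{\alpha}(k)$. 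I expect this lemma to be the main obstacle; granted it, the rest is essentially formal.

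The block (e)$\Leftrightarrow$(f)$\Leftrightarrow$(g)$\Leftrightarrow$(h)$\Leftrightarrow$(i) is then pure $l^{p}$-duality over the counting measure. Writing $f=g\,u^{-1/p}$ identifies $l^{p}(X,\mu,u)$ isometrically with $l^{p}(X,\mu)$, so for fixed $x$ the series $\sum_{y}\bigl(P_R^{\alpha}(d(x,y))u(y)^{-1/p}\bigr)|g(y)|$ converges for every $f\in l^{p}(X,\mu,u)$ precisely when $y\mapsto P_R^{\alpha}(d(x,y))u(y)^{-1/p}$ lies in $l^{p'}(X,\mu)$ (in $l^{\infty}(X,\mu)$ when $p=1$); this is (f)$\Leftrightarrow$(g). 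Replacing $P_R^{\alpha}(d(x,y))$ by the comparable quantity $(q^{d(x,y)}(1+d(x,y))^{1+\alpha/2})^{-1}$ from the lemma gives (h)$\Leftrightarrow$(i), and the ``for every $x$'' statement (g) and the ``for some $x$'' statement (h) are equivalent because $|d(x,y)-d(x_0,y)|\le d(x,x_0)$ forces $P_R^{\alpha}(d(x,y))\asymp_{x,x_0}P_R^{\alpha}(d(x_0,y))$ (and the same for the $p'$-powers). Finally the lemma gives $P_{*,R}^{\alpha}(f)(x)\le\sum_{y}\bigl(\sup_{0<t<R}P_t^{\alpha}(d(x,y))\bigr)|f(y)|\le C_R\,P_R^{\alpha}(|f|)(x)$, so (f)$\Rightarrow$(e), while applying (e) to $|f|$ gives $P_t^{\alpha}(|f|)(x)<\infty$ for all $t<R$, which is (f).

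For the remaining assertions: (a)$\Rightarrow$(b) is immediate, and (b)$\Rightarrow$(e) because if $P_{*,R}^{\alpha}(f)(x_0)=\infty$ for some $f\in l^{p}(X,\mu,u)$ then $v(\{x_0\})\le C^{p}\lambda^{-p}\|f\|_{l^{p}(X,\mu,u)}^{p}$ for all $\lambda>0$, forcing $v(\{x_0\})=0$ --- impossible for a weight. For (g)$\Rightarrow$(a), set $\Lambda(x):=\bigl(\sum_{y}P_R^{\alpha}(d(x,y))^{p'}u(y)^{-p'/p}\bigr)^{1/p'}<\infty$ (resp.\ $\Lambda(x)=\sup_{y}P_R^{\alpha}(d(x,y))u(y)^{-1}$ if $p=1$); by Hölder and the lemma, $P_{*,R}^{\alpha}(f)(x)\le C_R\,\Lambda(x)\|f\|_{l^{p}(X,\mu,u)}$, and enumerating $X=\{x_n\}_{n\in\mathbb{N}}$ and taking $v(x_n):=2^{-n}(1+\Lambda(x_n))^{-p}$ yields a weight with $\|P_{*,R}^{\alpha}f\|_{l^{p}(X,\mu,v)}\le C\|f\|_{l^{p}(X,\mu,u)}$, i.e.\ (a). Then (e)$\Rightarrow$(c) is the usual density argument: finitely supported functions are dense in $l^{p}(X,\mu,u)$, for them $\lim_{t\to0^{+}}P_t^{\alpha}(g)(x)=g(x)$ by \eqref{star}, and the bound $P_{*,R}^{\alpha}(h)(x)\le C_R\Lambda(x)\|h\|_{l^{p}(X,\mu,u)}$ (available since (e)$\Leftrightarrow$(f)$\Leftrightarrow$(g)$\Rightarrow$(a)) controls the error; and (c)$\Rightarrow$(d) is trivial.

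It remains to close the cycle with (d)$\Rightarrow$(h). From (d), each $f\in l^{p}(X,\mu,u)$ has a point $x_f$ with $\lim_{t\to0^{+}}P_t^{\alpha}(f)(x_f)=f(x_f)$; in particular $P_{t_0}^{\alpha}(|f|)(x_f)<\infty$ for some $t_0>0$, and since the lemma applied with radii $t_0$ and $R$ gives $P_{t_0}^{\alpha}(k)\asymp P_R^{\alpha}(k)$ for all $k$, also $P_R^{\alpha}(|f|)(x_f)<\infty$. Thus every $f\in l^{p}(X,\mu,u)$ has a point at which $\sum_{y}P_R^{\alpha}(d(\cdot,y))|f(y)|$ converges. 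If (h) failed, then $\sum_{y}P_R^{\alpha}(d(x,y))^{p'}u(y)^{-p'/p}$ (resp.\ $\sup_{y}P_R^{\alpha}(d(x,y))u(y)^{-1}$) would be infinite for every $x$ --- by base-point comparability, simultaneously --- and $l^{p}$-duality would produce a single $f\ge0$ in $l^{p}(X,\mu,u)$ with $\sum_{y}P_R^{\alpha}(d(x,y))f(y)=\infty$ for one, hence for every, $x$, contradicting the previous sentence. Therefore (h) holds, and the cycle (a)$\Rightarrow$(b)$\Rightarrow$(e)$\Rightarrow$(c)$\Rightarrow$(d)$\Rightarrow$(h)$\Rightarrow$(g)$\Rightarrow$(a), together with the lateral equivalences (e)$\Leftrightarrow$(f) and (h)$\Leftrightarrow$(i) established above, completes the proof.
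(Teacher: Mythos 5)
Your soft machinery is sound, and for $q\ge 2$ your route is essentially a reorganization of the paper's: the two-sided bound in your key lemma is exactly the estimate $P_t^{\alpha}(k)\sim t\,(1+k)^{-1-\frac{\alpha}{2}}q^{-k}$ (valid for $k>t^{2/\alpha}$) that the paper imports from Stos, and your cycle $a)\Rightarrow b)\Rightarrow e)\Rightarrow c)\Rightarrow d)\Rightarrow h)\Rightarrow g)\Rightarrow a)$, run with Landau/closed-graph duality, the base-point comparison via $|d(x,y)-d(x_0,y)|\le d(x,x_0)$, and the weight $v(x_n)=2^{-n}(1+\Lambda(x_n))^{-p}$, replaces the paper's chain $a)\Rightarrow\cdots\Rightarrow i)\Rightarrow a)$, whose technical heart is $d)\Rightarrow e)$ (two-base-point kernel comparison plus comparability of $\eta_t^{\alpha}$ over $t$ in compact intervals). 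Your direct $b)\Rightarrow e)$ via the weak-type inequality at a single point and your cleaner weight construction are both correct; the lemma itself is only sketched, but for $q\ge2$ the sketch follows the standard subordination argument and the statement is true (for the finitely many $k\le R^{2/\alpha}$ one just uses $P_t^{\alpha}(k)\le 1$ and $P_R^{\alpha}(k)>0$).

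The genuine gap is the case $q=1$, which the theorem includes ($q\ge1$) and which the paper treats separately in Section 5. On $\mathbb{Z}$ there is no spectral gap, the heat kernel decays like a Gaussian in $s$ rather than exponentially, and the subordinated kernel has purely polynomial tails: $P_t^{\alpha}(k)\sim t|k|^{-1-\alpha}$ for $0<t<1$ (the paper's Proposition \ref{Prop2}), not $t(1+k)^{-1-\frac{\alpha}{2}}q^{-k}$. If you carry out your own ``split the $s$-range'' computation for $q=1$ you will find the integral is dominated by $s\gtrsim k^{2}$ and produces the exponent $1+\alpha$, so your lemma is false as stated there; moreover the two conditions are genuinely inequivalent (one can build weights $u$ on $\mathbb{Z}$ for which the sum with exponent $1+\alpha$ converges while the one with $1+\frac{\alpha}{2}$ diverges), so this is not a cosmetic constant issue. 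Your soft steps would survive verbatim once the lemma is replaced by the correct $q=1$ estimate (and item $i)$ adjusted accordingly, exactly as the paper's Section 5 implicitly does), but as written the argument does not cover $q=1$.
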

\begin{theorem}\label{Teo2}
Let $1<p<\infty$, $\nu>0$ and $q\in\mathbb{N}$, $q\geq 1$. Assume that $u$ is a weight on $X$. The following assertions are equivalent:
\\a) to h) as in Theorem \ref{Teo1} when the semigroup $\{P_{t}^{\alpha}\}_{t>0}$ is replaced by $\{T_{t}^{\nu}\}_{t>0}$, $\nu>0$.
\begin{itemize}
\item[i)]{There exists $x\in X$ such that, when $1<p<\infty$,
$$\sum_{y\in X}\left(q^{d(x,y)}(1+d(x,y))^{\nu+1}\right)^{-p'}u(y)^{-p'/p}<\infty,$$}
and, when $p=1$,
$$
\sup_{y\in X}(q^{d(x,y)}(1+d(x,y))^{1+\nu}u(y))^{-1}<\infty.
$$
\end{itemize}
\end{theorem}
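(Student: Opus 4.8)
The plan is to repeat, with only cosmetic changes, the proof of Theorem~\ref{Teo1}, now with the family $\{T_t^\nu\}_{t>0}$ playing the role of $\{P_t^\alpha\}_{t>0}$. The implications relating a)--h) do not use anything specific to the subordinator; they rely only on the following features that $T^\nu$ shares with $P^\alpha$: for $t>0$ the kernel $T_t^\nu(d(x,y))$ is strictly positive (clear from \eqref{onda}, since $H_s>0$); for fixed $k$, $t\mapsto T_t^\nu(k)$ is continuous with $T_t^\nu(0)\to1$ and $T_t^\nu(k)\to0$ $(k\ge1)$ as $t\to0^+$; $\{T_t^\nu\}_{t\ge0}$ is a $C_0$-semigroup on $l^p(X,\mu)$; and \eqref{star} holds for $\{T_t^\nu\}$. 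Thus I would first record: a)$\Rightarrow$b), a)$\Rightarrow$e), e)$\Rightarrow$f) and c)$\Rightarrow$d) are immediate; b)$\Rightarrow$c) is the Banach principle argument (write $f=g+h$ with $g$ finitely supported and $\|h\|_{l^p(X,\mu,u)}$ small, use $T_t^\nu(g)(x)\to g(x)$ and kill the remainder at the fixed vertex $x$ with the pointwise form of the weak $(p,p)$ bound for $T_{\star,R}^\nu$, together with $|h(x)|\le u(x)^{-1/p}\|h\|_{l^p(X,\mu,u)}$); and e)$\Rightarrow$h), f)$\Rightarrow$h) follow because finiteness of $T_{\star,R}^\nu(f)(x)$, resp.\ of $T_R^\nu(f)(x)$, for every $f\in l^p(X,\mu,u)$ forces, by the uniform boundedness principle applied to the truncations of these positive operators, the linear functional $f\mapsto T_R^\nu(f)(x)$ to be bounded on $l^p(X,\mu,u)$, which is precisely the summability in h). The only implication needing genuinely new input about $T^\nu$ is the one joining this circle to statement i).

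That input is a two-sided kernel bound, exactly the analogue of what makes g)/h)$\Leftrightarrow$i) work in Theorem~\ref{Teo1}: for every $R>0$ there are constants $0<c_R\le C_R<\infty$, depending only on $R,\nu,q$, with
\begin{equation}\label{plankey}
T_t^\nu(k)\le\frac{C_R}{q^{k}(1+k)^{\nu+1}}\ \ (0<t\le R),\qquad T_R^\nu(k)\ge\frac{c_R}{q^{k}(1+k)^{\nu+1}},\qquad k\in\mathbb N.
\end{equation}
Granting \eqref{plankey}, everything else is routine. Since the right-hand side of the first inequality also bounds $\sup_{0<t<R}T_t^\nu(k)$, Hölder's inequality gives $T_{\star,R}^\nu(f)(x)\le c(x)\,\|f\|_{l^p(X,\mu,u)}$ with $c(x)^{p'}=\sum_{y\in X}\big(q^{d(x,y)}(1+d(x,y))^{\nu+1}\big)^{-p'}u(y)^{-p'/p}$ (and the corresponding supremum when $p=1$), which is finite by i); setting $v(x)=c(x)^{-p}w_0(x)$ for an arbitrary finite-mass weight $w_0$ then yields a). This is g)$\Rightarrow$a); at the same time \eqref{plankey} shows g)$\Leftrightarrow$h)$\Leftrightarrow$i), because the profile $q^{-k}(1+k)^{-\nu-1}$ is independent of $R$ and slowly varying in $k$, so the summability is insensitive to the choice of $R$ and of the base vertex. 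For d)$\Rightarrow$i) I would argue by contraposition: if i) fails then, by \eqref{plankey} and the converse of Hölder's inequality, for every vertex $x$ and every $R$ there is a nonnegative $f\in l^p(X,\mu,u)$ of norm one with $T_R^\nu(f)(x)=\infty$; diagonalising over an enumeration of $X$ and over $R=1/m$ produces a single $f\in l^p(X,\mu,u)$ with $T_{1/m}^\nu(f)(x)=\infty$ for all $x$ and all $m$, so $T_t^\nu(f)(x)\not\to f(x)$ at any $x$, contradicting d).

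The hard part is therefore \eqref{plankey}. I see two routes. The first substitutes the integral formula for $H_s$ from the Introduction into \eqref{onda} and estimates the resulting $s$-integral by splitting it into $s\lesssim k$, $s\asymp k$, $s\gtrsim k$; the factor $e^{-t^2/4s}$ is harmless for $0<t\le R$, and the delicate region is $s\asymp k$, where one needs the sharp heat-kernel bounds on homogeneous trees together with the precise effect of the weight $s^{-1-\nu}$. The second, cleaner, route is spectral: applying $\int_0^\infty s^{\mu-1}e^{-A/s-Bs}\,ds=2(A/B)^{\mu/2}K_\mu(2\sqrt{AB})$ to \eqref{onda} shows $T_t^\nu=\phi_t(\mathcal L)$ with $\phi_t(\lambda)=c_\nu(t\sqrt\lambda)^\nu K_\nu(t\sqrt\lambda)$, which is holomorphic on $\mathbb C\setminus(-\infty,0]$ and, near $\lambda=0$, equals $A_t(\lambda)+\lambda^\nu B_t(\lambda)$ with $A_t,B_t$ entire; writing $T_t^\nu(k)$ as a contour integral of $\phi_t$ against the explicit Green kernel $[(\lambda-\mathcal L)^{-1}]_k=G(\lambda)F(\lambda)^k$ of $\mathcal L$ on the tree and collapsing the contour onto the cut, the dominant contribution comes from the endpoint $\lambda=0$, where $F(0)=q^{-1}$ yields the factor $q^{-k}$ and the $\lambda^\nu$-singularity yields $\int_0^\varepsilon r^\nu e^{-ckr}\,dr\asymp(1+k)^{-\nu-1}$. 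In both approaches the picture is transparent; the real work, and the step where I expect to spend most effort, is making the upper and lower bounds in \eqref{plankey} quantitative with the exact power $(1+k)^{\nu+1}$, uniformly in $t$ for the upper bound. Once \eqref{plankey} is established, the proof of Theorem~\ref{Teo2} is complete.
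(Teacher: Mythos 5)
Your overall architecture is the same as the paper's: reduce everything to a two\mbox{-}sided kernel estimate $T_t^\nu(k)\asymp q^{-k}(1+k)^{-\nu-1}$ (uniformly in $0<t\le R$ for the upper bound, at a fixed $t=R$ for the lower bound) and then rerun the scheme of Theorem \ref{Teo1}. Granting that estimate, your soft steps are fine, and in places are mild variants of the paper's: you get d)$\Rightarrow$i) by contraposition plus a diagonal construction, where the paper proves d)$\Rightarrow$e) by comparing $T_t^{\nu}(d(x_1,y))/T_t^{\nu}(d(x_0,y))$ at two base points; and you get f)$\Rightarrow$h) from a Landau/uniform-boundedness argument, where the paper uses the closed graph theorem and duality. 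None of these differences is problematic.

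The genuine gap is that the kernel estimate itself is never proved. In the paper this is exactly Proposition \ref{Est}, and it is the only substantive new input Theorem \ref{Teo2} requires beyond Theorem \ref{Teo1}; your text states the bound, calls it ``the hard part'', and sketches two possible routes (direct estimation of the subordination integral, or a spectral/contour argument through $K_\nu$ and the Green kernel) without carrying either out. This cannot be waved through, because the precise profile is not visible from \eqref{onda} by naive power counting: the paper's proof needs the sharp comparison $H_u(k)\sim u^{-1}\phi_0(k)e^{-b_2u}H^{\mathbb Z}_{2\sqrt{q}\,u/(q+1)}(k+1)$ with $\phi_0(k)\sim(k+1)q^{-k/2}$, Bessel-function asymptotics for $H^{\mathbb Z}$, and a Laplace-method (saddle point) evaluation at the maximizer $u=(q+1)/(q-1)$ of the phase $g$, which supplies the extra factor $k^{-1/2}$ and turns $(2/(q+1))^k e^{-k\ln(2q/(q+1))}$ into $q^{-k}$; without that step one only gets the exponent $\nu+\frac12$ rather than $\nu+1$. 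Moreover your single uniform bound glosses over the regime $k<\nu$, where $T_t^\nu(k)$ behaves like $t^{2k}$ rather than $t^{2\nu}$ (the integral $\int_0^\infty e^{-t^2/4u}u^{k-\nu-1}\,du$ converges only for $k<\nu$, and for $k\ge\nu$ one must retain the factor $e^{-b_2u}$), which is why the paper's Proposition \ref{Est} is split into the cases a)--d). So the skeleton of your argument matches the paper, but the lemma that carries all the new content of Theorem \ref{Teo2} is missing.
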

\begin{theorem}\label{Teo3}
Let $1\le p<\infty$ and $q\in\mathbb{N}$, $q\geq 1$. Assume that $u$ is a weight on $X$. The following assertions are equivalent:
\\a), b), e), f) and g) as in Theorem \ref{Teo1} with the semigroup  $\{W_{t}\}_{t>0}$ instead of $\{P_{t}^{\alpha}\}_{t>0}$.
\begin{itemize}
\item[c)]{For a certain $R_1>0$ there exists $\lim_{t\to R_1}W_t(f)(x)$ for every $f\in l^{p}(X,\mu,u)$ and $x\in X$.}
\item [d)]{For a certain $R_1>0$ and for every $f\in l^{p}(X,\mu,u)$ there exists $x\in X$ for which there exists $\lim_{t\to R_1}W_t(f)(x)$.}
\end{itemize}
Each of the above conditions a)-g) implies that for every $f\in l^{p}(X,\mu,u)$ and $x\in X$, $\lim_{t\to 0^{+}}W_t(f)(x)=f(x).$
\end{theorem}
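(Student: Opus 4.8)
The plan is to run the same scheme that underlies Theorems \ref{Teo1} and \ref{Teo2}, but adapted to the fact that for the heat semigroup the relevant limit is taken as $t\to R_1$ for an interior point $R_1>0$ (rather than $t\to 0^+$), which is forced by the analyticity/positivity of $h_t$. First I would establish the two-sided kernel estimate that plays here the role of condition i) in the other theorems: from the Cowling--Meda--Setti bounds (via \cite{CMS, GJK}) one has, for each fixed $R>0$, a comparison
\[
H_R(k)\asymp_R q^{-k}(1+k)\,e^{-c k^2}\quad\text{or, more crudely,}\quad C_R^{-1} q^{-k}\le H_R(k)\le C_R\, q^{-k}(1+k),
\]
for all $k\in\mathbb N$, with constants depending only on $R$ and $q$; the essential point is that $H_R(k)$ is comparable (up to the polynomial factor $(1+k)$ that is harmless in the summability condition) to $q^{-k}$, together with monotonicity $H_{R}(k)\le C\, H_{R'}(k)$ for $0<R\le R'$ coming from the semigroup property $W_{R'}=W_{R'-R}W_R$ and positivity of the kernel. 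This reduces g) to the statement that $\sum_{y\in X} u(y)^{-p'/p}\,q^{-p' d(x,y)}<\infty$ for one (equivalently every) $x$, and analogously for $p=1$; note that, in contrast to the subordinated cases, there is no intrinsic critical $R$, so conditions a), b), e), f), g) are genuinely independent of the choice of $R$.

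Next I would prove the cycle of equivalences a) $\Rightarrow$ b) $\Rightarrow$ f) $\Rightarrow$ g) $\Rightarrow$ a), together with the parallel chain through e). The implications a) $\Rightarrow$ b) and e) $\Rightarrow$ f) are trivial; b) $\Rightarrow$ f) follows because weak-type boundedness on the fixed-sign, fixed-input case forces finiteness of $W_R(f)(x)$ pointwise (take $f\ge 0$, if $W_R(f)(x_0)=\infty$ for some $f\in l^p(X,\mu,u)$ one violates the weak inequality by restricting to a large ball and using $h_t\ge 0$). The equivalence f) $\Leftrightarrow$ g) is just the pointwise form of the duality characterization of boundedness of the single functional $g\mapsto \sum_y H_R(d(x,y)) g(y)$ on $l^p(X,\mu,u)$, i.e. Hölder's inequality in the weighted space and its sharpness. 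For g) $\Rightarrow$ a), I would use the explicit convolution structure: by the kernel comparison above, for $0<t<R$,
\[
W_t(f)(x)=\sum_{y\in X} H_t(d(x,y)) f(y)\le C_R\sum_{y\in X} q^{-d(x,y)}(1+d(x,y)) |f(y)|,
\]
uniformly in $t\in(0,R)$, so the maximal operator $W_{*,R}$ is dominated pointwise by a single positive convolution operator, whose $l^p(X,\mu,u)\to l^p(X,\mu,v)$ boundedness (for a suitable $v$, e.g. $v(x)=(\sum_y u(y)^{-p'/p} q^{-p' d(x,y)}(1+d(x,y))^{p'})^{-p/p'}$ when $p>1$, or the analogous choice when $p=1$) is immediate from Hölder; finiteness of that $v$ at every point is exactly g). This simultaneously yields the final assertion: since $W_{*,R}(f)(x)<\infty$ for all $x$, dominated convergence in the sum \eqref{calor} together with \eqref{star} (which holds for $f\in l^\infty$-approximants, or directly because $H_t(k)\to \delta_{0k}$ as $t\to 0^+$) gives $\lim_{t\to 0^+}W_t(f)(x)=f(x)$ for every $f\in l^p(X,\mu,u)$ and $x\in X$.

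It remains to bring in c) and d), the new conditions. The implication a) $\Rightarrow$ c) is obtained by writing, for $f\in l^p(X,\mu,u)$ and $t$ near $R_1$, $W_t(f)(x)-W_{R_1}(f)(x)=\sum_y (H_t(d(x,y))-H_{R_1}(d(x,y))) f(y)$ and applying dominated convergence, legitimate because $|H_t(k)-H_{R_1}(k)|\le 2\sup_{|t-R_1|\le R_1/2}H_t(k)\le C\,H_{2R_1}(k)$ and, by g) (which holds for $2R_1$ in place of $R$, the conditions being $R$-independent), $\sum_y H_{2R_1}(d(x,y))|f(y)|<\infty$; continuity of $t\mapsto H_t(k)$ is clear from the integral formula. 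Trivially c) $\Rightarrow$ d). Finally d) $\Rightarrow$ f): if $\lim_{t\to R_1}W_t(f)(x_0)$ exists (finite) for some $x_0$, then in particular $W_{R_1}(f)(x_0)=\lim_{t\to R_1^-}W_t(f)(x_0)$ is finite for every $f\in l^p(X,\mu,u)$; taking $f\ge0$ and using positivity and monotonicity of the heat kernel this forces $W_R(f)(x_0)<\infty$ for every $R$, hence f) holds at $x_0$, and then g) holds, which by translation-invariance (the operator $W_R$ commutes with the isometry group, and we may move $x_0$ to any $x$) gives f) at every $x$.

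The main obstacle I anticipate is the passage from a pointwise-at-one-point hypothesis (d), or h), or i)) to the global statement: one must exploit that although $\mu$ is non-doubling, the heat kernel's sharp exponential decay $H_R(k)\asymp q^{-k}$ makes the weighted summability condition essentially a statement about the tail of $u^{-p'/p}$ along rays, which is stable under shifting the base point by the group action — this ``spreading'' argument is exactly the place where the geometry of the tree (vertex-transitivity and exponential volume growth) is used, and where the heat-semigroup argument diverges from the subordinated cases by not having to track a critical radius.
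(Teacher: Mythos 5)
The pivotal step of your proposal is false: the two-sided comparison $C_R^{-1}q^{-k}\le H_R(k)\le C_R(1+k)q^{-k}$ (or $H_R(k)\asymp q^{-k}(1+k)e^{-ck^2}$) does not hold. By the Cowling--Meda--Setti estimate that the paper uses, $H_R(k)\sim \phi_0(k)\,\frac{e^{-R}}{R}\,H^{\mathbb Z}_{cR}(k+1)$ with $H^{\mathbb Z}_s(k)=e^{-s}I_k(s)$, so for \emph{fixed} $R$ one has $H_R(k)\approx (1+k)q^{-k/2}(cR)^{k+1}/(k+1)!$, i.e.\ super-exponential (factorial-type) decay in $k$; the polynomial-times-$q^{-k}$ profile is a feature of the subordinated kernels $P_t^\alpha$, $T_t^\nu$ (it appears only after integrating in time), not of the heat kernel at fixed time. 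Your claimed lower bound $H_R(k)\gtrsim q^{-k}$ therefore fails, and with it three things you build on it: the reduction of g) to $\sum_y u(y)^{-p'/p}q^{-p'd(x,y)}<\infty$ (this is precisely why Theorem \ref{Teo3}, unlike Theorems \ref{Teo1} and \ref{Teo2}, has no kernel-free condition i)); the assertion that a), b), e), f), g) are independent of $R$ (the ratio $H_R(k)/H_{R'}(k)$ behaves like $(R/R')^k$, so the conditions for different $R$ are genuinely different); and the a)$\Rightarrow$c) argument, which invokes ``g) for $2R_1$'' through that unproved $R$-independence (the paper instead gets c) from b) for $R_1<R$ by the dense-subspace argument).

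The second gap is the passage from one point to all points. In d) the point $x_0$ depends on $f$, so the closed-graph/duality argument (which needs a fixed $x$ and all $f$) is not available, and ``translation invariance'' does not rescue it because the weight $u$ is not invariant under the isometry moving $x_0$ to $x$. With the correct asymptotics the ratio $H_R(d(x,y))/H_R(d(x_0,y))$ is unbounded in $y$ (unlike for $P^\alpha_t$, $T^\nu_t$, where it is controlled by $Cq^{d(x_0,x)}(1+d(x_0,x))^{1+\alpha/2}$), so transferring finiteness from $x_0$ to $x$ requires a loss in time: the heart of the paper's proof is the estimate $H_{R_0/4}(d(x,y))\le G(d(x_0,x))\,H_{R_0}(d(x_0,y))$ for all $y$, obtained by a delicate term-by-term analysis of the CMS bounds, and nothing in your proposal substitutes for it. (Also, ``monotonicity forces $W_R(f)(x_0)<\infty$ for every $R$'' only works for $R\le R_1$: positivity and the semigroup law give $H_t(k)\le C\,H_{R_1}(k)$ for $t\le R_1$, as in \eqref{remark}, but for $R>R_1$ the ratio $H_R(k)/H_{R_1}(k)$ blows up in $k$.) Your upper bound $H_t(k)\le C_R(1+k)q^{-k}$ for $t<R$, the semigroup-positivity monotonicity trick, and the dominated-convergence argument for the final assertion about $t\to 0^+$ are fine, but the substantive content of the theorem --- working with $H_R$ itself in g) rather than a $q^{-k}$ surrogate, and the base-point transfer with shrinking time --- is missing.
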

%
%
%

To conclude this introduction we want to mention that harmonic analysis related to flow Laplacian on $X$ has been developed recently in \cite{ATV1, ATV2, BSV, LSTV1, LSTV2}. Main definitions and results about trees with nondoubling flow measures can be found in \cite{LSTV2}. On $X$ the canonical flow measure $\lambda$ is defined by
$$\lambda(x):=q^{l(x)},\, x\in X,$$
where $l$ is the level function on $X$ defined after fixing a reference point and a mythical ancestor. The flow measure $\lambda$ is constant only when $q=1$ and the flow Laplacian (also studied in \cite{HS}), denoted by $\mathbb{L}$ on $X$ and defined by $\mu$ is given by
$$\mathbb{L}(f)(x):=f(x)-\frac{1}{2\sqrt{q}}\sum_{y\sim x}\frac{\lambda(y)}{\lambda(x)}f(y),\, x\in X.$$
We have that
$$\mathbb{L}:=\frac{1}{1-b}\lambda^{-1/2}(\mathcal{L}-bI)\lambda^{1/2},\, \mbox{where } b:=\frac{(\sqrt{q}-1)^2}{q+1}.$$
We denote by $\{\mathbb{W}_{t}\}_{t>0}$ the heat semigroup defined by $\mathbb{L}$ in $L^{2}(X,\lambda)$. Thus, for every $t>0$, the semigroups $W_t$ and $\mathbb{W}_t$ are connected through the expression
$$\mathbb{W}_{t}=\lambda^{-1/2}e^{\frac{bt}{1-b}}W_{\frac{t}{1-b}}\lambda^{1/2}.$$
By using the last equality we can deduce from Theorems \ref{Teo1}-\ref{Teo3} the corresponding proper--ties for $\{\mathbb{W}_t\}_{t>0}$ and the subordinates semigroups associated to it.

\bigskip

\underline {Organization of the work:} In the next three sections we will prove Theorems \ref{Teo1}-\ref{Teo3} and for that some estimations for the heat kernel associated to the Laplacian $\mathcal{L}$ established in \cite{CMS} will be fundamental. The results when $q=1$ can be proved in a similar way that the corresponding ones for $q\geq 2$. However, since some estimates used in those are not valid for the case $X=\mathbb{Z}$, for convenience of the reader, the proofs for $q=1$ are sketched.

Throughout this paper by $C$ and $c$ we always denote positive constants that can change from one line to another.

\section{Proof of Theorem \ref{Teo1} for $q\ge 2$}

We consider $q\ge 2$. The properties $a)\Rightarrow b)$,
$c)\Rightarrow d)$, $e)\Rightarrow f)$  and $g)\Rightarrow h)$ are
clear. We denote by
$$\mathcal{F}=\{f:X\to\mathbb{C}\mbox{ the set $\{x\in X:\, f(x)\neq 0\}$ is finite}\}$$
a dense subspace of $l^q(X,\mu,u)$. Then, since for every $f\in\mathcal{F}$, $\displaystyle \lim_{t\to 0^+}P_t^{\alpha}f(x)=f(x)$, by using a standard argument we can see that $b)\Rightarrow c)$.

Suppose now that $d)$ holds. Let $0\leq f\in l^{p}(X,\mu,u)$. We assume that  $\displaystyle \lim_{t\to 0^+}P_t^{\alpha}(f)(x_0)=f(x_0)$, for some $x_0\in X$. Then, there exists $R_0\in (0,1)$ such that
\begin{equation}\label{lim}
\sup_{0<t<R_0}P_{t}^{\alpha}(f)(x_0)<\infty.
\end{equation}
According to \cite[Theorem 3.1]{Sto} the following property holds for the $\alpha$-stable kernel $P_t^\alpha(k)$, $t>0$ and $k\in \mathbb{N}$,
$$P_{t}^{\alpha}(k)\sim \phi_0(k)tk^{-2-\frac{\alpha}{2}}q^{-k/2},\, t>0,\, k\in\mathbb{N},\, k>t^{2/\alpha}.$$
By using the fact that $q\geq 2$ (see \cite[p. 4282]{CMS} or \cite[(1)]{Sto}),
$$\phi_{0}(k)\sim (k+1)q^{-k/2},\, k\in\mathbb{N},$$
with $\phi_{0}$ being the spherical function (\cite[(1.1)]{CMS}), we get
\begin{equation}\label{A1}
P_t^{\alpha}(k)\sim k^{-1-\frac{\alpha}{2}}tq^{-k},\, t>0,\, k\in\mathbb{N},\, k>t^{2/\alpha}.
\end{equation}
Let us now consider $x_1\in X\setminus\{x_0\}$ and the function $g:X\to\mathbb{R}$ such that $g(x_1)=1$ and $g(x)=0$, $x\in X\setminus\{x_1\}$. We have that
$$
P_t^{\alpha}(g)(x)=\sum_{y\in X}P_{t}^{\alpha}(d(x,y))g(y)=P_{t}^{\alpha}(d(x,x_1)),\, x\in X,\, t>0.
$$
Then, it is clear that
\begin{equation}\label{Con}
\lim_{t\to 0^+}P_t^{\alpha}(k)=0,\, k\in \mathbb{N}\setminus\{0\},\, \lim_{t\to 0^+}P_t^{\alpha}(0)=1.
\end{equation}
By (\ref{Con}) there exists $R_1\in (0,R_0)$ such that $P_{t}^{\alpha}(0)\sim 1$ and $P_t^\alpha(d(x_1,x_0))\le C$, for every $t\in (0, R_1]$.

For every $t>0$, it follows that
\begin{equation}\label{papi}
\begin{split}
P_{t}^{\alpha}(f)(x_1)&=\sum_{y\in X}P_{t}^{\alpha}(d(x_1,y))f(y)=P_{t}^{\alpha}(d(x_1,x_0))f(x_0)\\
&+P_{t}^{\alpha}(0)f(x_1)+\sum_{y\in X\setminus\{x_0,\, x_1\}}\frac{P_{t}^{\alpha}(d(x_1,y))}{P_{t}^{\alpha}(d(x_0,y))}P_{t}^{\alpha}(d(x_0,y))f(y).
\end{split}\end{equation}
Therefore, since without loss of generality we can assume that for $0<t<R_1<1$, $\min\{d(x_0,y),\, d(x_1,y)\}>t^{2/\alpha}$, $y\in X\setminus\{x_0,\, x_1\}$, according to \eqref{A1}, we get that
\begin{equation*}
\begin{split}
\frac{P_{t}^{\alpha}(d(x_1,y))}{P_{t}^{\alpha}(d(x_0,y))}&\sim q^{d(x_0,y)-d(x_1,y)}\left(\frac{d(x_0,y)+1}{d(x_1,y)+1}\right)^{1+\frac{\alpha}{2}}\\
&\leq C q^{d(x_0,x_1)}\left(\frac{d(x_0,x_1)+1+d(x_1,y)}{d(x_1,y)+1}\right)^{1+\frac{\alpha}{2}}\\
&\leq C,\, t\in (0, R_1).
\end{split}
\end{equation*}
Thus, by \eqref{papi} and \eqref{lim} we obtain that
$$P_{t}^{\alpha}(f)(x_1)\leq C+ \sup_{0<t<R_1}|P_t(f)(x_0)|<\infty,\, 0<t\leq R_1.$$
Once we have obtained the previous inequality our next objective is to get
\begin{equation}\label{Madrid}
\sup_{R_1\le t<1}|P_t(f)(x_1)|\leq C|P_{R_1}(f)(x_1)|
\end{equation}
that clearly implies $e)$. It is known that (\cite[p. 89]{Gr})
\begin{equation}\label{la_eta}
\eta_{t}^{\alpha}(u)\sim\left\{\begin{array}{ll}
\displaystyle{t^{\frac{1}{2-\alpha}}u^{\frac{-(4-\alpha)}{4-2\alpha}}e^{-c_1t^{\frac{2}{2-\alpha}}u^{\frac{-\alpha}{2-\alpha}}}},\, u\leq t^{2/\alpha},\\
\displaystyle{tu^{-1-\frac{\alpha}{2}}e^{-c_1t^{\frac{2}{2-\alpha}}u^{\frac{-\alpha}{2-\alpha}}}},\, u\geq t^{2/\alpha}.
\end{array}\right.
\end{equation}
where $c_1:=\frac{2-\alpha}{2}\Big(\frac{\alpha}{2}\Big)^{\alpha/(2-\alpha)}$. We fix now $0<a<b<\infty$. We observe that \eqref{Madrid} will be proved if we see that, for some $C>0$,
$$\eta_{t}^{\alpha}(u)\leq C \eta^{\alpha}_{{a}}(u),\, u\in (0,\infty),\, t\in [a,b].$$
To prove the previous inequality let us consider $t\in [a,b]$. By \eqref{la_eta} we have that
\begin{itemize}
\item If $u\in (0,a^{2/\alpha})$, then $u\in (0,t^{2/\alpha})$ and
\begin{equation*}
\begin{split}
\eta_{t}^{\alpha}(u)&\leq C\displaystyle{b^{\frac{1}{2-\alpha}}u^{\frac{-(4-\alpha)}{4-2\alpha}}e^{-c_1a^{\frac{2}{2-\alpha}}u^{\frac{-\alpha}{2-\alpha}}}}\\
&\leq C\left(\frac{b}{a}\right)^{\frac{1}{2-\alpha}}\displaystyle{a^{\frac{1}{2-\alpha}}u^{\frac{-(4-\alpha)}{4-2\alpha}}e^{-c_1a^{\frac{2}{2-\alpha}}u^{\frac{-\alpha}{2-\alpha}}}}\\
&\le C \eta_{a}^{\alpha}(u). 
\end{split}
\end{equation*}
\item If $u\in (b^{2/\alpha},\infty)$ then $u\in (t^{2/\alpha},\infty)\subseteq (a^{2/\alpha},\infty)$ and
$$
\eta_{t}^{\alpha}(u)\leq C \displaystyle{bu^{-1-\frac{\alpha}{2}}e^{-c_1a^{\frac{2}{2-\alpha}}u^{\frac{-\alpha}{2-\alpha}}}}\le C\eta^{\alpha}_a(u). 
$$
\item In the case $u\in[a^{2/\alpha}, t^{2/\alpha}]$ it follows
\begin{equation*}
\begin{split}
\eta_{t}^{\alpha}(u)&\leq C\displaystyle{b^{\frac{1}{2-\alpha}}a^{\frac{-(4-\alpha)}{4-2\alpha}}e^{-c_1a^{\frac{2}{2-\alpha}}u^{\frac{-\alpha}{2-\alpha}}}}\\
&\leq C\displaystyle{b^{\frac{1}{2-\alpha}+\frac{2}{\alpha}(1+\frac{\alpha}{2})}a^{\frac{-(4-\alpha)}{4-2\alpha}-1}au^{-1-\frac{\alpha}{2}}e^{-c_1a^{\frac{2}{2-\alpha}}u^{\frac{-\alpha}{2-\alpha}}}}\\
&\le C\eta^{\alpha}_{a}(u).
\end{split}
\end{equation*}
\item Finally when $u\in[t^{2/\alpha}, b^{2/\alpha}]\subseteq [a^{2/\alpha}, b^{2/\alpha}]$ we also obtain
$$
\eta_{t}^{\alpha}(u)\leq C \displaystyle{bu^{-1-\frac{\alpha}{2}}e^{-c_1a^{\frac{2}{2-\alpha}}u^{\frac{-\alpha}{2-\alpha}}}}\le C\eta^{\alpha}_a(u).
$$
\end{itemize}

Suppose now that $f)$ holds. Let $x\in X$. We consider the mapping $Q_x: l^{p}(X,\mu,u)\to l^1(X,\mu)$ where
$$Q_x(f)(y):=P_{R}^{\alpha}(d(x,y))f(y),\, y\in X.$$
By using the closed graph Theorem we deduce that $Q_x$ is bounded from $l^{p}(X,\mu,u)$ into $l^1(X,\mu)$ and by duality arguments this implies that
$$\sum_{y\in X}P_{R}^{\alpha}(d(x,y))^{p'}u(y)^{-p'/p}<\infty,$$
when $1<p<\infty$, and
$$
\sup_{y\in X}\frac{P_R^\alpha (d(x,y))}{u(y)}<\infty,
$$
when $p=1$, as wanted.

If $h)$ holds, then from \eqref{A1} it follows that $i)$ is true.

Let us finally assume that $i)$ is true and let us prove that $a)$ is satisfied. By using \eqref{A1}, for a fixed and arbitrary $x\in X$, we obtain
\begin{equation*}
\begin{split}
\sup_{t\in (0,R_1)}|P_{t}^{\alpha}(f)(x)|&\leq\sup_{t\in (0,R_1)} P_{t}^{\alpha}(0)|f(x)|+\sup_{t\in (0, R_1)}\sum_{y\in X\setminus\{x\}}P_{t}^{\alpha}(d(x,y))|f(y)|\\
&\leq C\left(|f(x)|+\sum_{y\in X\setminus\{x\}}(d(x,y)+1)^{-1-\alpha/2}q^{-d(x,y)}|f(y)|\right),
\end{split}
\end{equation*}
for $0<t<R_1$ where $R_1$ is given in the proof of $d)$ implies $e)$. Since for every $x,\, y\in X$,
$$\left(\frac{d(x_0,y)+1}{d(x,y)+1}\right)^{1+\alpha/2}\leq \left(\frac{d(x,y)+d(x_0,x)+1}{d(x,y)+1}\right)^{1+\alpha/2}\leq (d(x_0,x)+1)^{1+\alpha/2},$$
where $x_0\in X$ is the fixed element for which the condition $i)$ is true, then
\begin{equation}\label{todobien}
\begin{split}
&\sup_{t\in (0,R_1)}|P_{t}^{\alpha}(f)(x)|\\
&\leq C\left(|f(x)|+\left(d(x_0,x)+1\right)^{1+\frac{\alpha}{2}}\sum_{y\in X\setminus\{x\}}\frac{|f(y)|}{(d(x_0,y)+1)^{1+\frac{\alpha}{2}}q^{d(x,y)}}\right)\\
&\leq C\left(|f(x)|+\left(d(x_0,x)+1\right)^{1+\frac{\alpha}{2}}q^{d(x_0,x)}\sum_{y\in X\setminus\{x\}}\frac{|f(y)|}{(d(x_0,y)+1)^{1+\frac{\alpha}{2}}q^{d(x_0,y)}}\right).
\end{split}
\end{equation}
Since by H\"older inequality we get when $1<p<\infty$
\begin{equation*}
\begin{split}
S&:=\sum_{y\in X\setminus\{x\}}\frac{|f(y)|}{(d(x_0,y)+1)^{1+\frac{\alpha}{2}}q^{d(x_0,y)}}\\
&\leq\left(\sum_{y\in X\setminus\{x\}}\left(\frac{1}{(d(x_0,y)+1)^{1+\frac{\alpha}{2}}q^{d(x_0,y)}}\right)^{p'}u^{-p'/p}(y)\right)^{1/p'}\left(\sum_{y\in X\setminus\{x\}}|f(y)|^pu(y)\right)^{1/p},
\end{split}
\end{equation*}
and when $p=1$
\begin{equation*}
\begin{split}
S&:=\sum_{y\in X\setminus\{x\}}\frac{|f(y)|}{(d(x_0,y)+1)^{1+\frac{\alpha}{2}}q^{d(x_0,y)}}\\
&\leq \sup_{y\in X\setminus\{x\}}\frac{1}{(d(x_0,y)+1)^{1+\frac{\alpha}{2}}q^{d(x_0,y)}u(y)}\sum_{y\in X\setminus\{x\}}|f(y)|u(y),
\end{split}
\end{equation*}
by the fact that $f\in l^{p}(X,\mu,u)$ and the hypothesis $i)$ it follows that \begin{equation}\label{todobien2}
S<\infty.
\end{equation}
 On the other hand, since we can find $w$ a weight on $X$ such that
$$\sum_{y\in X\setminus\{x\}}\left((d(x_0,y)+1)^{1+\frac{\alpha}{2}}q^{d(x_0,y)}\right)^pw(y)<\infty,$$
then $P_{*,R_1}$ is bounded from $l^p(X,\mu,u)$ into $l^{p}(X,\mu,v)$ as wanted where $v:=\min \{w,u\}$. Indeed given $f\in l^p(X,\mu,u)$
by \eqref{todobien} and \eqref{todobien2} it follows that
\begin{equation*}
\begin{split}
&\left(\sum_{x\in X}\left(\sup_{t\in (0,R_1)}|P_{t}^{\alpha}(f)(x)|\right)^pv(x)\right)^{1/p}\\
&\leq \left(\sum_{y\in X}|f(y)|^pv(y)\right)^{1/p}+S\left(\sum_{y\in X\setminus\{x\}}\left((d(x_0,y)+1)^{1+\frac{\alpha}{2}}q^{d(x_0,y)}\right)^pv (y)\right)^{1/p}\\
&\le C\left(\sum_{y\in X}|f(y)|^pu(y)\right)^{1/p}.
\end{split}
\end{equation*}
Thus, the proof of Theorem \ref{Teo1} is finished when $q\ge 2$.

\section{Proof of Theorem \ref{Teo2} for $q\ge 2$}

We consider $q\ge 2$. Before proving Theorem \ref{Teo2} we establish
the following auxiliary result that plays for the family
$\{T_t^\nu\}_{t>0}$ a similar role as \eqref{A1} for the semigroup
$\{W_{t}^{\alpha}\}_{t>0}$, $\nu>0$ given in \eqref{onda}.
\begin{proposition}\label{Est}
Let $\nu>0$ and $q\in\mathbb{N}$, $q\geq 2$. We have that
\begin{itemize}
\item [a)] For every $t>0$, $k\in\mathbb{N}$ with $k<\nu$, there exists $C=C({q})>0$ such that
$$0\leq T_t^{\nu}(k)\leq C\frac{t^{2k}}{(k+1)^{k+\frac{1}{2}}}\Big(\frac{2e}{q+1}\Big)^{k+1}.$$
\item [b)] For every $t\in (0,1)$, $k\in\mathbb{N}$ with $k\ge\nu$, there exists $C>0$ such that
$$0\le T_{t}^{\nu}(k)\leq C\frac{t^{2\nu}}{k^{\nu+1}q^{k}}.$$
\item [c)] For every $t\in (0,1)$, $k\in\mathbb{N}\setminus\{0\}$ there exists $C>0$ such that
$$T_{t}^{\nu}(k)\geq C\frac{t^{2\nu}}{k^{\nu+1}q^{k}}.$$
\item [d)] For every $t\in (0,1)$, $T_t^{\nu}(0)\sim 1.$
\end{itemize}
\end{proposition}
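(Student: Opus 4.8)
The plan is to substitute heat–kernel bounds into the integral representation \eqref{onda} and reduce each of a)--d) to an explicit estimate for a one–dimensional integral. Two ingredients are needed. First, the elementary facts that $W_s=e^{-s}e^{sA}$, where $A$ is the (stochastic, non‑negative) averaging operator $Af(x)=\frac1{q+1}\sum_{d(x,y)=1}f(y)$, so that $H_s(k)=e^{-s}\sum_{n\ge k,\,n\equiv k\,(2)}\frac{s^n}{n!}A^n(0,x_k)$ with $0\le A^n(0,x_k)\le 1$; hence $0\le H_s(k)\le e^{-s}\sum_{n\ge k}\frac{s^n}{n!}\le\frac{s^k}{k!}$, $H_s(0)\le 1$, $\lim_{s\to0^+}H_s(0)=1$, and $H_s(k)\sim(q+1)^{-k}s^k/k!$ as $s\to0^+$ (the leading term $n=k$ being the unique geodesic). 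Second, the sharp two–sided heat–kernel estimates of Cowling--Meda--Setti \cite{CMS} --- the same ones used in \cite{Sto} to obtain \eqref{A1} --- which for $q\ge2$ compare $H_s(k)$ with an explicit expression built from $\phi_0(k)\sim(1+k)q^{-k/2}$, the factor $e^{-bs}$ with $b=(\sqrt q-1)^2/(q+1)$, a power of $s$, and a Gaussian‑type factor in $k^2/s$. The computation used repeatedly is $\int_0^\infty e^{-t^2/(4s)}s^{\beta-1}\,ds=(t^2/4)^\beta\Gamma(-\beta)$ for $\beta<0$ (substitution $s=t^2/(4r)$), which produces the powers of $t$.

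Parts d) and a) are then immediate. For d): $H_s(0)\le1$ gives, for every $t>0$,
\[
T_t^\nu(0)\le\frac{t^{2\nu}}{4^\nu\Gamma(\nu)}\int_0^\infty e^{-t^2/(4s)}\frac{ds}{s^{1+\nu}}=1,
\]
while, for $0<t<1$, restricting the integral to $s\in(t^2,1)$, where $e^{-t^2/(4s)}\ge e^{-1/4}$ and $H_s(0)\ge c>0$, yields $T_t^\nu(0)\ge c'(1-t^{2\nu})$, bounded below on $(0,\tfrac12]$; on $[\tfrac12,1)$ one uses continuity and positivity. For a): when $k<\nu$ the bound $H_s(k)\le s^k/k!$ and the displayed identity give
\[
T_t^\nu(k)\le\frac{t^{2\nu}}{4^\nu\Gamma(\nu)\,k!}\int_0^\infty e^{-t^2/(4s)}s^{k-1-\nu}\,ds=\frac{\Gamma(\nu-k)}{\Gamma(\nu)}\,\frac{t^{2k}}{4^k\,k!},
\]
and Stirling's bound $k!\ge c\,(k+1)^{k+1/2}e^{-(k+1)}$ rewrites $4^{-k}/k!$ as a constant multiple of $(k+1)^{-k-1/2}\big(2e/(q+1)\big)^{k+1}$; since $k$ runs over the finite set $\{0,\dots,\lceil\nu\rceil-1\}$ the remaining factors are bounded, which gives a). (Using instead the sharper $H_s(k)\le C(1+k)\big(\tfrac2{q+1}\big)^k\tfrac{s^k}{k!}e^{-bs}$ extracted from \cite{CMS} keeps the dependence on $q$ of the constant explicit.)

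The core of the statement is b)--c), the two–sided estimate for $k\ge\nu$ and $0<t<1$. Since $e^{-t^2/(4s)}\le1$ we have $T_t^\nu(k)\le\frac{t^{2\nu}}{4^\nu\Gamma(\nu)}\int_0^\infty H_s(k)\,s^{-1-\nu}\,ds$, and, restricting to $s>t^2$, $T_t^\nu(k)\ge c\,t^{2\nu}\int_1^\infty H_s(k)\,s^{-1-\nu}\,ds$ for $t<1$; hence it suffices to prove
\[
\int_0^\infty H_s(k)\,s^{-1-\nu}\,ds\ \asymp\ k^{-1-\nu}q^{-k}\qquad(k\ge\nu),
\]
the convergence at $s=0$ being exactly $k>\nu$ (the borderline case $k=\nu\in\mathbb N$ being absorbed by the regularising factor $e^{-t^2/(4s)}$). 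The piece $\int_0^1$ is negligible (by $H_s(k)\le s^k/k!$ it is $\le\frac1{k!\,(k-\nu)}\ll k^{-1-\nu}q^{-k}$), so the claim concerns $\int_1^\infty H_s(k)\,s^{-1-\nu}\,ds$; into this one inserts the two–sided CMS estimate and evaluates by the Laplace method. The integrand concentrates around $s\asymp k$, where $e^{-bs}$ balances the Gaussian in $k^2/s$; the value at the critical point combines with the $q^{-k/2}$ of $\phi_0(k)$ to produce exactly $q^{-k}$, and the powers of $s$ coming from $s^{-1-\nu}$, from the CMS estimate, and from the width $\sim\sqrt k$ of the critical region assemble into $k^{-1-\nu}$; both inequalities follow because the CMS estimate is two–sided. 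This is precisely the argument by which \cite{Sto} establishes \eqref{A1} (the case $\nu=\tfrac12$, since $T_t^{1/2}=P_t^1$), with the $\alpha$–stable density there replaced by the explicit kernel $\frac{t^{2\nu}}{4^\nu\Gamma(\nu)}e^{-t^2/(4s)}s^{-1-\nu}$.

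I expect the main obstacle to be this last Laplace–type estimate: one must keep track of the constants precisely enough to see that the exponential rate surviving the balance at $s\asymp k$ is exactly $q^{-k/2}$, so that, together with the $q^{-k/2}$ carried by $\phi_0(k)$, one obtains $q^{-k}$ rather than $q^{-k/2}e^{-ck}$ for some unrelated $c$; this hinges on the precise constant in the CMS Gaussian exponent. One must also treat separately the borderline case $k=\nu\in\mathbb N$ and verify, for the lower bound in c), that the part of the integral over $s\asymp k$ retains a fixed proportion of the total mass.
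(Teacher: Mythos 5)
Your parts a) and d) are fine and in fact more self-contained than the paper's treatment: the bound $H_s(k)\le s^k/k!$ coming from $W_s=e^{-s}e^{sA}$, the exact evaluation $\int_0^\infty e^{-t^2/(4s)}s^{\beta-1}ds=(t^2/4)^\beta\Gamma(-\beta)$, and Stirling give a), while $H_s(0)\le 1$ plus positivity of $H_s(0)$ near $s=0$ give d); the paper instead routes everything through the Cowling--Meda--Setti comparison and the uniform Bessel asymptotics of Levi--Santagati--Tabacco--Vallarino. The constant in a) then depends on $\nu$ as well as $q$ (since you absorb $(q+1)^k/8^k$ over the finite range $k<\nu$), which matches what the paper actually obtains (its constant contains $\Gamma(\nu-k)$).

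The genuine gap is in b) and c), which are the substance of the proposition: you reduce them to $\int H_s(k)s^{-1-\nu}ds\asymp k^{-1-\nu}q^{-k}$ and then only assert that inserting ``the two-sided CMS estimate'' and applying the Laplace method at the critical scale $s\asymp k$ produces exactly $q^{-k}$, explicitly flagging that this ``hinges on the precise constant in the CMS Gaussian exponent.'' That is precisely the step that cannot be done with a Gaussian-type factor $e^{-ck^2/s}$: at the critical scale $s\asymp k$ one is in the transition regime of $e^{-s}I_{k+1}(cs)$ where the Gaussian form is not sharp, and balancing $e^{-bs}e^{-ck^2/s}$ with unspecified $b,c$ only yields $q^{-k/2}e^{-c'k}$ for some unrelated $c'$. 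What is needed (and what the paper does) is the uniform Bessel asymptotic $H^{\mathbb Z}$-estimate from \cite{LSTV1}, the rescaling $s=(k+1)u$, and the explicit phase $g(u)=\sqrt{1+\tfrac{4q}{(q+1)^2}u^2}-u+\ln u-\ln\bigl(1+\sqrt{1+\tfrac{4q}{(q+1)^2}u^2}\bigr)$, whose maximum at $u=(q+1)/(q-1)$ gives $I(\nu,q,k)\sim k^{-1/2}\bigl(\tfrac{q+1}{2q}\bigr)^k$ and hence, together with $\phi_0(k)\sim(k+1)q^{-k/2}$, the factor $q^{-k}$ on both sides. Without carrying out this computation your two-sided estimate is not established. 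A secondary issue: your parenthetical that the borderline case $k=\nu\in\mathbb N$ is ``absorbed by the regularising factor $e^{-t^2/(4s)}$'' does not work as stated, since $\int_0^1 e^{-t^2/(4s)}s^{-1}ds\asymp\log(1/t)$, so that factor only trades a divergence for a logarithm in $t$; this case genuinely needs separate care (and is delicate in the paper as well, whose upper-bound conclusion is stated for $k>\nu$).
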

\begin{proof}
We adapt some ideas developed in \cite[Theorem 3.1]{Sto}. We recall that
\begin{equation}\label{laT}
T_t^{\nu}(k):=\frac{t^{2\nu}}{4^{\nu}\Gamma(\nu)}\int_{0}^{\infty} e^{-t^2/4u}H_u(k)\frac{du}{u^{1+\nu}},\quad k\in\mathbb{N},\, t>0.
\end{equation}
For every $k\in\mathbb{N}\setminus\{0\}$ and $t>0$ by \cite[Proposition 2.5 and p. 4275]{CMS} and \cite[Proposition 3.1]{LSTV1} it follows that
$$
T_t^{\nu}(k)\sim
t^{2\nu}\int_{0}^{\infty}\frac{e^{-\frac{t^2}{4u}-b_2u}}{u^{\nu+2}}\phi_0(k)H_{u\frac{2\sqrt{q}}{q+1}}^\mathbb{Z}(k+1)\,
du,\quad t>0,\quad k\in \mathbb{N},
$$
with $b_2:=1-\frac{2\sqrt{q}}{q+1}\in (0,1)$. Here
$H_t^\mathbb{Z}(k)$, $t>0$ and $k\in \mathbb{N}$, denotes the heat
kernel associated with $X=\mathbb{Z}$ given by
$$H_t^\mathbb{Z}(k)=e^{-t}I_k(t),\quad t>0, \quad k\in \mathbb{N},$$
where $I_\nu$ is the modified Bessel function of the first kind and
order $\nu$. Therefore
\begin{equation*}
\begin{split}
T_t^{\nu}(k)&\leq Ct^{2\nu}\int_{0}^{\infty}e^{-\frac{t^2}{4u}}(k+1)q^{-k/2}\frac{e^{-\left(b_2+\frac{2\sqrt{q}}{q+1}\right)u+\sqrt{(k+1)^2+\frac{4q}{(q+1)^2}u^2}}}{\left(1+(k+1)^2+\frac{4qu^2}{(q+1)^2}\right)^{1/4}}\\
&\cdot \left(\frac{\frac{2\sqrt{q}u}{q+1}}{k+1+\sqrt{(k+1)^2+\frac{4q^2u^2}{(q+1)^2}}}\right)^{k+1}\frac{du}{u^{\nu+2}}\\
&\leq C\frac{t^{2\nu}}{(k+1)^{k+\frac{1}{2}}}\Big(\frac{2e}{q+1}\Big)^{k+1}\int_{0}^{\infty} e^{-b_2u-\frac{t^2}{4u}}u^{k-1-\nu}\, du\\
&\leq C\frac{t^{2\nu}}{(k+1)^{k+\frac{1}{2}}}\Big(\frac{2e}{q+1}\Big)^{k+1}\int_{0}^{\infty} e^{-\frac{t^2}{4u}}u^{k-1-\nu}\, du\\
&\leq C\frac{t^{2k}}{(k+1)^{k+\frac{1}{2}}}\Big(\frac{2e}{q+1}\Big)^{k+1}\int_{0}^{\infty} e^{-z}z^{\nu-k-1}dz\\
&\le C\Gamma(\nu-k)\frac{t^{2k}}{(k+1)^{k+\frac{1}{2}}}\Big(\frac{2e}{q+1}\Big)^{k+1},\quad t>0, \quad k\in \mathbb{N}, \quad k<\nu,
\end{split}
\end{equation*}
where we have used the fact that $b_2>0$ and in the fourth inequality we have done the change of variable $z=t^2/4u$. Thus the conclusion given in $a)$ follows.
Let us prove the estimates given in $c)$ and $d)$. Indeed, doing similar computations as before, by using the fact that $b_2+\frac{2\sqrt{q}}{q+1}=1$, for every $0<t<1$,  we get that
\begin{equation}\label{grand}
\begin{split}
T_t^{\nu}(k)&\geq Ct^{2\nu}(k+1)q^{-k/2}\int_{0}^{\infty}\frac{e^{-u+\sqrt{(k+1)^2+\frac{4q}{(q+1)^2}u^2}}}{\left(1+(k+1)^2+\frac{4qu^2}{(q+1)^2}\right)^{1/4}}\\
&\cdot \left(\frac{\frac{2\sqrt{q}u}{q+1}}{k+1+\sqrt{(k+1)^2+\frac{4q^2u^2}{(q+1)^2}}}\right)^{k+1}\frac{du}{u^{\nu+2}}\\&\geq Ct^{2\nu}k^{-\nu-\frac{1}{2}}\left(\frac{2}{q+1}\right)^{k}\int_{0}^{\infty}\frac{e^{(k+1)\left(-u+\sqrt{1+\frac{4q}{(q+1)^2}u^2}\right)}u^{k-\nu-1}}{\left(1+\frac{2\sqrt{q}u}{q+1}\right)^{1/2}\left(1+\sqrt{1+\frac{4qu^2}{(q+1)^2}}\right)^{k+1}}\, du\\
&\geq Ct^{2\nu}k^{-\nu-\frac{1}{2}}\left(\frac{2}{q+1}\right)^{k}\int_{0}^{\infty}\frac{e^{(k+1)g(u)}}{u^{\nu+2}\left(1+\frac{2\sqrt{q}u}{q+1}\right)^{1/2}}\, du\\
&:=Ct^{2\nu}k^{-\nu-\frac{1}{2}}\left(\frac{2}{q+1}\right)^{k} I(\nu,q,k).
\end{split}
\end{equation}
where
$$g(u):=\sqrt{1+\frac{4q}{(q+1)^2}u^2}-u+\ln u-\ln \left(1+\sqrt{1+\frac{4qu^2}{(q+1)^2}}\right),\, u>0,$$
is a real function that attains its maximum in $u=(q+1)/(q-1)$ (see \cite[p. 1443]{Sto}). By using the Laplace method we notice that
\begin{equation}\label{Laplac}
I(\nu,q,k)\sim \frac{1}{\sqrt{k}}e^{-k\ln\left(\frac{2q}{q+1}\right)},\quad k\to\infty.
\end{equation}
Therefore, since by similar computations as the ones done in \eqref{grand} we can also obtain that
$$T_t^{\nu}(k)\leq Ct^{2\nu}k^{-\nu-\frac{1}{2}}\left(\frac{2}{q+1}\right)^{k} I(\nu,q,k),\quad k\ge\nu,$$
by \eqref{Laplac} we conclude that
$$T_t^{\nu}(k)\geq C \frac{t^{2\nu}}{k^{\nu+1}q^k},\quad k\to\infty,\, \mbox{uniformly in }\, t\in (0,1),$$
and
$$T_t^{\nu}(k)\leq C \frac{t^{2\nu}}{k^{\nu+1}q^k},\quad k\to\infty,\, k>\nu,\, \mbox{ uniformly in }\, t\in (0,1),$$
as wanted.

Let us finally prove the assertion $d)$. Note that, since from \eqref{laT}
$$T_t^{\nu}(0):=\frac{1}{\Gamma(\nu)}\int_{0}^{\infty} e^{-v}v^{\nu-1}H_{\frac{t^2}{4v}}(0)\, dv,\quad t>0,$$
by using again \cite[Proposition 2.5 and p. 4275]{CMS} and \cite[Proposition 3.1]{LSTV1}, it follows that
$$
T_t^{\nu}(0)\sim\int_{0}^{\infty}e^{-v}v^{\nu-1}e^{-\frac{t^2}{4v}}\frac{\sqrt{q}}{2(q+1)\left(1+\sqrt{1+\left(\frac{t^2\sqrt{q}}{2v(q+1)}\right)^2}\right)}\, dv,\quad t>0.
$$
Then
$$T_t^{\nu}(0)\leq C \int_{0}^{\infty}e^{-v}v^{\nu-1}\, dv\leq C(\Gamma(\nu)),\,\mbox{if } t>0,$$
and
$$T_t^{\nu}(0)\geq C \int_{0}^{\infty}\frac{e^{-v-\frac{1}{4v}}v^{\nu}}{v+\sqrt{1+v^2}}\, dv\geq C,\,\mbox{when }0<t<1.$$
Thus, we conclude that $T_{t}^{\nu}(0)\sim 1$ when $0<t<1$.
\end{proof}
Now we can obtain the

\begin{proofTeo}
To prove $a)\Rightarrow b)\Rightarrow c)\Rightarrow d)$ we can
proceed as in the proof of Theorem \ref{Teo1}. Let us suppose now
that $d)$ holds. We consider $0\leq f\in l^{p}(X,\mu,u)$ and $x_0\in
X$ such that $lim_{t\to 0^+} T_t^{\nu}(f)(x_0)=f(x_0)$. Then there
exists $R_1\in (0,1)$ satisfying
\begin{equation}\label{M}
sup_{0<t\leq R_1} T_t^{\nu}(f)(x_0)<\infty.
\end{equation}
Let us consider $x_1\in X$ and we are going to prove
that
\begin{equation}\label{lae}
sup_{0<t\leq 1} T_t^{\nu}(f)(x_1)<\infty.
\end{equation}
Indeed, first of all we notice that, for every $t\in [R_1,1]$, by
\eqref{onda}, it follows that
$$T_t^{\nu}(f)(x_1)\leq\frac{1}{4^{\nu}\Gamma(\nu)}\int_{0}^{\infty}\frac{e^{-\frac{R_1^2}{4s}}}{s^{1+\nu}}W_s(f)(x_1)ds\leq R_1^{-2\nu}T_{R_1}^{\nu}(f)(x_1).$$
Secondly, for every $t>0$, we get
\begin{align}\label{B}
T_{t}^{\nu}(f)(x_1)&=\sum_{y\in X}T_{t}^{\nu}(d(x_1,y))f(y)\nonumber\\
 &=\sum_{y\in X,\,min\{d(x_0,y),d(x_1,y)\}<\nu}T_{t}^{\nu}(d(x_1,y))f(y)\nonumber\\
 &\hspace{15mm}+\sum_{y\in
 X,\,min\{d(x_0,y),d(x_1,y)\}\ge\nu}\frac{T_{t}^{\nu}(d(x_1,y))}{T_{t}^{\nu}(d(x_0,y))}T_{t}^{\nu}(d(x_0,y))f(y).
\end{align}
By Proposition \ref{Est} b), it is clear that
\begin{equation}\label{est1}
\begin{split}
\frac{T_{t}^{\nu}(d(x_1,y))}{T_{t}^{\nu}(d(x_0,y))}&\leq C\left(\frac{d(x_0,y)}{d(x_1,y)}\right)^{\nu+1}q^{d(x_0,y)-d(x_1,y)}\\
&\leq C\left(\frac{d(x_0,x_1)+d(x_1,y)}{d(x_1,y)}\right)^{\nu+1}q^{d(x_0,x_1)}\\
&\leq C,\, y\in X, \quad min\{d(x_0,y),d(x_1,y)\}\ge \nu\quad t\in (0,1).
\end{split}
\end{equation}

By Proposition \ref{Est} (a) and (b), \eqref{M} and \eqref{B} we deduce that there exists $C>0$ such that
\begin{align}
T_{t}^\nu(f)(x_1)&\le \sum_{y\in X,\,min\{d(x_0,y),d(x_1,y)\}<\nu}T_{t}^{\nu}(d(x_1,y))f(y)\nonumber\\
&\hspace{15mm}+C\sum_{y\in
 X,\,min\{d(x_0,y),d(x_1,y)\}\ge\nu}T_{R_1}^{\nu}(d(x_0,y))f(y)\nonumber\\
 &\le \sum_{y\in X,\,min\{d(x_0,y),d(x_1,y)\}<\nu}T_{t}^{\nu}(d(x_1,y))f(y)+CT_{t}^\nu(f)(x_0)\le C, \quad t\in (0,R_1].
\end{align}
We have used that the same sum has a finite number of terms. So that, \eqref{lae} follows and,
consequently also the assertion $e)$ and $f)$.

Let us suppose that $f)$ holds. By proceeding as in the proof of
$f)\Rightarrow g)$ in the previous theorem we deduce that, when $1<p<\infty$,
$$\sum_{y\in X}T_{R}^{\nu}(d(x,y))^{p'}v(y)^{-p'/p}<\infty,\quad x\in X,$$
and, when $p=1$,
$$
\sup_{y\in X}\frac{T_R^\nu (d(x,y))}{u(y)}<\infty, \quad x\in X,
$$
for some $R>0$, and $g)$ is just proved.

The property $g)\Rightarrow h)$ is clear.

Suppose now that $h)$ holds. By using Proposition \ref{Est}, $c)$,
we obtain, when $1<p<\infty$,
$$\sum_{y\in X\setminus\{x\}}\left(q^{d(x,y)}d(x,y)^{\nu+1}\right)^{-p'}u(y)^{-p'/p}\leq R^{-2\nu p'} \sum_{y\in X\setminus\{0\}
}(T_{R}^{\nu}(d(0,y)))^{p'}u(y)^{-p'/p}<\infty,$$
and, when $p=1$,
$$
\sup_{y\in X\setminus\{x\}}(q^{d(x,y)}d(x,y)^{\nu+1}u(y))^{-1}\le R^{-2\nu}\sup_{y\in X\setminus\{x\}}\frac{T_R^\nu(d(x,y))}{u(y)}<\infty,
$$
where $R>0$ and $x\in X$ are given
in the assumption $h)$. That is, $i)$ is proved.


Let us assume that $i)$ is satisfied. There exists $x_0\in X$ such that, when $1<p<\infty$,
$$\sum_{y\in X}\left(q^{d(x_0,y)}(1+d(x_0,y))^{1+\frac{\alpha}{2}}\right)^{-p'}u(y)^{-p'/p}<\infty,$$
and, when $p=1$,
$$
\sup_{y\in X}(q^{d(x_0,y)}(1+d(x_0,y))^{1+\frac{\alpha}{2}}u(y))^{-1}<\infty.
$$
Let us fix $R>0$. Then, for
every $0<t<R$ and $y\in X$, $y\neq x$ for a fixed but arbitrary
$x\in X$, by (\ref{est1}) we
know that
\begin{equation}\label{B1}
\left(\frac{d(x_0,y)}{d(x,y)}\right)^{\nu+1}q^{d(x_0,y)-d(x,y)}\leq
C(d(x_0,x)+1)^{\nu+1}q^{d(x_0,x)}.
\end{equation}
 Thus, by Proposition
\ref{Est} a), for every $0<t<R$, we get
\begin{equation*}
\begin{split}
|T_t^{\nu}(f)(x)|&\leq |T_t^{\nu}(|f|)(x)|\le C\left(\sum_{y\in X,\, d(x,y)<\nu}\frac{R^{2d(x,y)}}{(d(x,y)+1)^{d(x,y)+\frac{1}{2}}}\left(\frac{2e}{q+1}\right)^{d(x,y)+1} |f(y)|\right.\\
&+ \left. \sum_{y\in X,\, d(x,y)\ge \nu}R^{\nu}d(x,y)^{-\nu-1}q^{-d(x,y)}|f(y)|\right)\\
&\leq C\left(\sum_{y\in X,\, d(x,y)<\nu} |f(y)|+\sum_{y\in X,\, d(x,y)\ge\nu}d(x,y)^{-\nu-1}q^{-d(x,y)}|f(y)|\right)\\
&\leq C\left(|f(x)|+\sum_{y\in
X\setminus\{x\}}d(x,y)^{-\nu-1}q^{-d(x,y)}|f(y)|\right),\quad x\in
X.
\end{split}
\end{equation*}
We notice that H\"older inequality and \eqref{B1} lead, when $1<p<\infty$, to
\begin{equation*}
\begin{split}
&\sum_{y\in X\setminus\{x\}} d(x,y)^{-\nu-1}q^{-d(x,y)}|f(y)|\\
&\leq (d(x_0,x)+1)^{\nu+1}q^{d(x_0,x)}\sum_{y\in X\setminus\{x\}} d(x_0,y)^{-\nu-1}q^{-d(x_0,y)}|f(y)|\\
&\leq (d(x_0,x)+1)^{\nu+1}q^{d(x_0,x)}\left(\sum_{y\in
X\setminus\{x\}}
(d(x_0,y)^{\nu+1}q^{d(x_0,y)})^{-p'}u^{}(y)\right)^{\frac{1}{p'}}\left(\sum_{y\in
X}|f(y)|^p\, u(y)\right)^{\frac{1}{p}},
\end{split}
\end{equation*}
and, when $p=1$, to
\begin{equation*}
\begin{split}
&\sum_{y\in X\setminus\{x\}} d(x,y)^{-\nu-1}q^{-d(x,y)}|f(y)|\\
&\leq (d(x_0,x)+1)^{\nu+1}q^{d(x_0,x)}\sum_{y\in X\setminus\{x\}} d(x_0,y)^{-\nu-1}q^{-d(x_0,y)}|f(y)|\\
&\leq (d(x_0,x)+1)^{\nu+1}q^{d(x_0,x)}\sup_{y\in
X\setminus\{x\}}
(d(x_0,y)^{\nu+1}q^{d(x_0,y)}u(y))^{-1}\sum_{y\in
X}|f(y)|\, u(y).
\end{split}
\end{equation*}
Since we can guarantee that
there exists $w$ a weight in $X$ such that
$$\sum_{x\in X}\left((d(x_0,x)+1)^{\nu+1}q^{d(x_0,x)}\right)^p\, w(x)<\infty,$$
by the previous computation we get that $T_{\ast,R}^{\nu}$ is
bounded from $l^{p}(X,\mu,u)$ into $l^{p}(X,\mu,v)$ where $v=\min \{u,w\}$
as wanted.
\end{proofTeo}

\section{Proof of Theorem \ref{Teo3} for $q\ge 2$}
We consider $q\ge 2$. Properties $a)\Rightarrow b)\Rightarrow
c)\Rightarrow d)$ can be proved as in Theorem \ref{Teo1}.
\bigskip
Assume that $d)$ holds. There exists $R_1>0$ such that, for every
$f\in l^p(X,\mu,u)$, the limit
$$\lim_{t\to R_1}H_t(f)(x_0)$$
exists, for some $x_0\in X$.\\

Let $0\leq f\in l^p(X,\mu,u)$ and $x_0\in X$ such that $\lim_{t\to
R_1}W_t(f)(x_0)$ exists. Then, there is a $t_1\in(0,R_1)$, such that
$$\sup_{t\in[t_1,R_1)}W_t(f)(x_0)=A<\infty.$$
Suppose that $R>0$. We have that
\begin{align*}
\frac{1}{t}H_{\frac{t\sqrt{2q}}{q+1}}^\mathbb{Z}(1+d(x,y))&=
\frac{1}{t}e^{-t\frac{\sqrt{2q}}{q+1}}I_{1+d(x,y)}(t\frac{\sqrt{2q}}{q+1})\\
&=\frac{1}{t}e^{-t\frac{\sqrt{2q}}{q+1}}\sum_{k=0}^\infty
\frac{(t\sqrt{2q}/(q+1))^{2k+1+d(x,y)}}{\Gamma(k+1)\Gamma(k+d(x,y)+2)2^{1+d(x,y)+2k}}\\
&=e^{-t\frac{\sqrt{2q}}{q+1}}\Big(\frac{\sqrt{2q}}{q+1}\Big)^{1+d(x,y)}t^{d(x,y)}\sum_{k=0}^\infty
\frac{(t\sqrt{2q}/(q+1))^{2k}}{\Gamma(k+1)\Gamma(k+d(x,y)+2)2^{1+d(x,y)+2k}}\\
&\le
R^{d(x,y)}\Big(\frac{\sqrt{2q}}{q+1}\Big)^{1+d(x,y)}\sum_{k=0}^\infty
\frac{(R\sqrt{2q}/(q+1))^{2k}}{\Gamma(k+1)\Gamma(k+d(x,y)+2)2^{1+d(x,y)+2k}}\\
&\le
\frac{1}{R}e^{R\sqrt{2q}/(q+1)}H_{\frac{R\sqrt{2q}}{q+1}}^\mathbb{Z}(1+d(x,y)),\quad
x,y\in X, \quad 0<t<R.
\end{align*}

 Then, according to \cite[Proposition 2.5]{CMS}, we get
\begin{equation}\label{remark}
\begin{split}
H_t(d(x,y))&\leq C\frac{(q+1)^3\sqrt{q}}{(q-1)^3}\frac{e^{-t}}{t}\phi_0(d(x,y))H_{\frac{t\sqrt{2q}}{q+1}}^\mathbb{Z}(1+d(x,y))\\
&\leq C\frac{(q+1)^3\sqrt{q}}{(q-1)^3}\phi_0(d(x,y))\frac{e^{2R\sqrt{2q}/(q+1)}}{R}H_{\frac{R\sqrt{2q}}{q+1}}^\mathbb{Z}(1+d(x,y))\\
&\leq C\frac{(q+1)^3q}{(q-1)^3}e^{2R(1+\sqrt{2q}/(q+1))}H_R(d(x,y))\\
&=CH_R(d(x,y)),\quad x,y\in X,\quad 0<t<R.
\end{split}
\end{equation}

Thus
$$W_t(f)(x_0)\leq C\max\{A,W_{t_1}(f)(x_0)\},\quad 0<t<R_1.$$
We are going to see that
$$W_{R_0/4}(f)(x)<\infty,\quad x\in X,$$
where $R_0=R_1/2$.\\

Let $x\in X$, by using \cite[Proposition 2.5 and p. 4282]{CMS} and
\cite[Proposition 3.1]{CMS} we get
\begin{equation*}
\begin{split}
\frac{h_{R_0/4}(d(x,y))}{h_{R_0}(d(x_0,y))}&\leq C \frac{(d(x,y)+1)q^{-d(x,y)/2}}{(d(x_0,y)+1)q^{-d(x_0,y)/2}}\left(\frac{1+(1+d(x_0,y))^2+4R_0^2q/(q+1)^2}{1+(1+d(x,y))^2+R_0^2q/4(q+1)^2}\right)^{1/4}\\
&\times\exp\left\{\sqrt{(d(x,y)+1)^2+\frac{R_0^2q}{4(q+1)^2}}-\sqrt{(d(x_0,y)+1)^2+\frac{4R_0^2q}{(q+1)^2}}\right\}\\
&\times\left(\frac{R_0\sqrt{q}/(2(q+1))}{d(x,y)+1+\sqrt{(d(x,y)+1)^2+\frac{R_0^2q}{4(q+1)^2}}}\right)^{1+d(x,y)}\\
&\times\left(\frac{d(x_0,y)+1+\sqrt{(d(x_0,y)+1)^2+4R_0^2q/(q+1)^2}}{2R_0\sqrt{q}/(q+1)}\right)^{1+d(x_0,y)},\quad
y\in X.
\end{split}
\end{equation*}
We divide the estimate in several parts:
\begin{itemize}
\item[]\begin{equation}
\begin{split}
\frac{d(x,y)+1}{d(x_0,y)+1}&\leq\frac{d(x,x_0)+d(x_0,y)+1}{d(x_0,y)+1}\leq
d(x,x_0)+1,\quad y\in X.
\end{split}
\end{equation}
\item[]\begin{equation}
\begin{split}
\frac{q^{-d(x,y)/2}}{q^{-d(x_0,y)/2}}&\leq\frac{q^{-(d(x_0,y)-d(x,x_0))/2}}{q^{-d(x_0,y)/2}}\leq
q^{-d(x,x_0)/2},\quad y\in X.
\end{split}
\end{equation}
\item[]\begin{equation}
\begin{split}
\frac{1+(1+d(x_0,y))^2+\frac{4R_0^2q}{(q+1)^2}}{1+(1+d(x,y))^2+\frac{R_0^2q}{4(q+1)^2}}&\leq \frac{1+(1+d(x_0,x)+d(x,y))^2+\frac{4R_0^2q}{(q+1)^2}}{1+(1+d(x,y))^2+\frac{R_0^2q}{4(q+1)^2}}\\
&\leq C\frac{1+(1+d(x,y))^2+d(x_0,x)^2+\frac{4R_0^2q}{(q+1)^2}}{1+(1+d(x,y))^2+\frac{R_0^2q}{4(q+1)^2}}\\
&\leq C(1+d(x_0,x)^2),\quad y\in X.
\end{split}
\end{equation}
\item[]\begin{equation}
\begin{split}
&\sqrt{(d(x,y)+1)^2+\frac{R_0^2q}{4(q+1)^2}}-\sqrt{(d(x_0,y)+1)^2+\frac{4R_0^2q}{(q+1)^2}}\\
&\leq \frac{(d(x,x_0)+d(x_0,y)+1)^2-(d(x_0,y)+1)^2-\frac{15R_0^2q}{4(q+1)^2}}{\sqrt{(d(x,y)+1)^2+\frac{R_0^2q}{4(q+1)^2}}+\sqrt{(d(x_0,y)+1)^2+\frac{4R_0^2q}{(q+1)^2}}}\\
&=\frac{d(x,x_0)^2+2d(x,x_0)(d(x_0,y)+1)-\frac{15R_0^2q}{4(q+1)^2}}{\sqrt{(d(x,y)+1)^2+\frac{R_0^2q}{4(q+1)^2}}+\sqrt{(d(x_0,y)+1)^2+\frac{4R_0^2q}{(q+1)^2}}}\\
&\leq 2d(x,x_0)+\frac{d(x,x_0)^2(q+1)}{2R_0\sqrt{q}},\quad y\in X.
\end{split}
\end{equation}
\item[]We put
$$
F(y)=A^{1+d(x,y)}B^{1+d(x_0,y)}, \quad y\in X,
$$
where
\begin{equation*}
\begin{split}
A&=\frac{R_0\sqrt{q}/(2(q+1))}{d(x,y)+1+\sqrt{(d(x,y)+1)^2+\frac{R_0^2q}{4(q+1)^2}}}\quad\quad\text{ and }\\
B&=\frac{d(x_0,y)+1+\sqrt{(d(x_0,y)+1)^2+4R_0^2q/(q+1)^2}}{2R_0\sqrt{q}/(q+1)}.
\end{split}
\end{equation*}
We have that

\begin{equation}
\begin{split}
&F(y)\leq A^{1+d(x_0,y)-d(x_0,x)}B^{1+d(x_0,y)}\\
&\leq C4^{-(1+d(x_0,y))}A^{-d(x_0,x)}\\
&\times\left(\frac{d(x_0,y)+1+\sqrt{(d(x_0,y)+1)^2+4\frac{R_0^2q}{(q+1)^2}}}{d(x,y)+1+\sqrt{(d(x,y)+1)^2+\frac{R_0^2}{2(q+1)}}}\right)^{1+d(x_0,y)}.
\end{split}
\end{equation}
On one hand, we have that $F(y)\leq C$ for $y\in X,$ $d(x_0,y)\leq
4d(x_0,x)$.

On the other hand we obtain
\begin{equation}
\begin{split}
F(y)&\leq C4^{-(1+d(x_0,y))}
\left(\frac{2d(x_0,y)+2+2\frac{R_0\sqrt{q}}{q+1}}{\frac{3}{4}d(x_0,y)+1}\right)^{1+d(x_0,y)}\\
&\times\left(\frac{2(1+\frac{5}{4}d(x_0,y))+\frac{R_0\sqrt{q}}{q+1}}{R_0\sqrt{q}/(2(q+1))}\right)^{d(x_0,x)}\\
&\leq C\Big(\frac{3}{2}\Big)^{-(1+d(x_0,y))}\left(\frac{d(x_0,y)+1+\frac{R_0\sqrt{q}}{q+1}}{d(x_0,y)+\frac{4}{3}}\right)^{1+d(x_0,y)}\\
&\times\left(\frac{2(1+\frac{5}{4}d(x_0,y))+\frac{R_0\sqrt{q}}{q+1}}{R_0\sqrt{q}/(2(q+1))}\right)^{d(x_0,x)}\leq
C,
\end{split}
\end{equation}
for $y\in X$ and $d(x_0,y)\geq 4d(x_0,x)$
\end{itemize}

We conclude that
$$H_{R_0/4}(d(x,y))\leq C H_{R_0}(d(x_0,y)), \quad y\in X.$$
It follows that
$$W_{R_0/4}(f)(x)\leq C W_{R_0}(f)(x_0),\quad x\in X,$$
and $e)$ is established by using the property \eqref{remark}.

It is clear that $e)\Rightarrow f)$.

Assume that $f)$ holds. By using the closed graph theorem and duality as  in Theorem \ref{Teo1} to prove property $g)$, we deduce that $g)$ is proved.\\

The property $g)\Rightarrow h)$ is clear.

 Suppose now that $h)$
holds, that is, for a certain $x_0\in X$ and $R_0>0$, when
$1<p<\infty$, $$ \sum_{y\in
X}P_{R_0}^\alpha(d(x_0,y))^{p'}u(y)^{-p'/p}<\infty ,$$ and, when
$p=1$, $$ \sup_{y\in
X}\frac{P_{R_0}^\alpha(d(x_0,y)}{u(y)}<\infty.$$
From the
estimations in the proof of $d)\Rightarrow e)$, we deduce that there
exists a function $G:\mathbb{N}\to (0,\infty)$ such that

$$H_{R_0/4}(d(x,y))\leq G(d(x_0,x))H_{R_0}(d(x_0,y)),\quad x,y\in X.$$
By using the H\"older's inequality and (\ref{remark}) we get, when
$1<p<\infty$,
\begin{equation*}
\begin{split}
&\sup_{0<t<R_0/4}\Big|\sum_{y\in X}H_t(d(x,y))f(y)\Big|\leq \sum_{y\in X}H_{R_0/4}(d(x,y))|f(y)|\\
&\leq G(d(x_0,x))\sum_{y\in X}H_{R_0}(d(0,y))|f(y)|\\
&\leq G(d(x_0,x))\Big(\sum_{y\in
X}H_{R_0}(d(x_0,y))^{p'}u(y)^{-p'/p}\Big)^{1/p'}\Big(\sum_{y\in
X}|f(y)|^pu(y)\Big)^{1/p},\quad x\in X,
\end{split}
\end{equation*}
and, when $p=1$,
$$
\sup_{0<t<R_0/4}\Big|\sum_{y\in X}H_t(d(x,y))f(y)\Big|\leq
G(d(x_0,x))\sup_{y\in
X}\frac{P_{R_0}^\alpha(d(x_0,y)}{u(y)}\sum_{y\in X}|f(y)|u(y), \quad
x\in X.
$$
 Hence, if $v$ is a weight on $X$ such that
$$\sum_{x\in X}G(d(x,x_0))^pv(x)<\infty$$
then $W_{*,R_0/4}$ is bounded from $l^p(X,\mu,u)$ into
$l^p(X,\mu,v)$ and $a)$ is proved


\section{Proof of Theorems \ref{Teo1}, \ref{Teo2} and \ref{Teo3} for $q=1$}

In this section we prove the results in the case $q=1$. The homogeneous tree $X$ coincides with $\mathbb{Z}$ and $(\mathbb{Z},d,\mu)$ is a space of homogeneous type in the sense of Coifman and Weiss \cite{CW}.\\
According to \cite[Proposition 3.1]{LSTV1}  we observe
\begin{equation}\label{Eq51}
H_t^Z(k)\sim \frac{1}{(1+|k|+t)^{1/2}}e^{-t(1+\Phi(|k|/t))},\quad
k\in \mathbb{Z}.
\end{equation}
where $\Phi(z)=z\log(z+\sqrt{1+z^2})-\sqrt{1+z^2}$, $z\ge 0$.
In addition we note the following behaviours of $\Phi$, which will be useful in the sequel:
\begin{equation}\label{Eq53}
\Phi(z)\sim \frac{z^2}{2},\quad\text{ as }z\to 0^+,\quad\text{ and }\quad\Phi(z)\sim z\log z,\quad\text{ as }z\to+\infty.
\end{equation}

\begin{proposition}\label{Prop2}
We define $$
\mathbb{P}_t^\alpha(k)=\begin{cases}&t|k|^{-1-\alpha},\quad
k\in\mathbb{Z}\setminus\{0\}\\&1,\quad k=0.\end{cases}$$ Then,
$\mathbb{P}_t^\alpha\sim P_t^\alpha$, $0<t<1$.
\end{proposition}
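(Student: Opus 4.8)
The plan is to obtain the two-sided estimate $\mathbb{P}_t^\alpha(k)\sim P_t^\alpha(k)$ for $0<t<1$ by transporting to $\mathbb{Z}$ the subordination formula
$$P_t^\alpha(k)=\int_0^\infty \eta_t^\alpha(u)H_u^{\mathbb{Z}}(k)\,du$$
(using the notation $\eta_t^\alpha$ of \eqref{la_eta}, which gives the Laplace-type behaviour of the stable subordinator density in the variable $u$), and then comparing this integral against the explicit size of $H_u^{\mathbb{Z}}(k)$ given by \eqref{Eq51}. First I would record the two relevant asymptotics for $\eta_t^\alpha$ from \eqref{la_eta} and for $H_u^{\mathbb{Z}}(k)$ from \eqref{Eq51}, together with the behaviours of $\Phi$ in \eqref{Eq53}. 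The case $k=0$ is immediate: $\int_0^\infty \eta_t^\alpha(u)H_u^{\mathbb{Z}}(0)\,du\sim \int_0^\infty \eta_t^\alpha(u)(1+u)^{-1/2}\,du$, and splitting at $u=t^{2/\alpha}$ (which is comparable to a constant for $0<t<1$) one checks directly that this is $\sim 1$, using $\int_0^\infty \eta_t^\alpha(u)\,du=1$ and that the mass of $\eta_t^\alpha$ concentrates where $u\lesssim 1$.

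For $k\in\mathbb{Z}\setminus\{0\}$ the strategy is to show the integral is governed by the region $u\gtrsim |k|$, where $\eta_t^\alpha(u)\sim t u^{-1-\alpha/2}e^{-c_1 t^{2/(2-\alpha)}u^{-\alpha/(2-\alpha)}}$ and, since $u\ge |k|$ and $t<1$, the exponential factor in $\eta_t^\alpha$ is $\sim 1$, while $H_u^{\mathbb{Z}}(k)\sim u^{-1/2}e^{-u(1+\Phi(|k|/u))}$. Thus on $u\gtrsim|k|$ the integrand is comparable to $t\,u^{-3/2-\alpha/2}e^{-u(1+\Phi(|k|/u))}$; a Laplace/saddle-point analysis of $u\mapsto u(1+\Phi(|k|/u))$ shows the phase has no interior minimum and is increasing, so the integral over $u\ge c|k|$ is $\sim t|k|^{-1-\alpha/2}\cdot|k|^{-1/2+\text{(correction)}}$ — here I would be careful: a direct computation should yield exactly $t|k|^{-1-\alpha}$ after accounting for the $e^{-u}$ decay forcing $u$ near $|k|$ and the polynomial prefactors. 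Then I would show the complementary region $0<u\lesssim |k|$ contributes a negligible amount, because there $H_u^{\mathbb{Z}}(k)$ carries the super-exponentially small factor $e^{-u\Phi(|k|/u)}$ with $\Phi(|k|/u)\sim (|k|/u)\log(|k|/u)\to\infty$, which beats the at-most-polynomial growth of $\eta_t^\alpha$ near $0$; this gives the upper bound on that piece, and the lower bound is obtained simply by restricting the (positive) integrand to a fixed interval $u\in[|k|,2|k|]$.

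The main obstacle I anticipate is the bookkeeping in the Laplace-method estimate on $u\gtrsim|k|$: one must track the precise power of $|k|$ produced by integrating $u^{-3/2-\alpha/2}e^{-u(1+\Phi(|k|/u))}$ and confirm it matches $|k|^{-1-\alpha}$ rather than some other exponent, and one must verify the exponential factor $e^{-c_1 t^{2/(2-\alpha)}u^{-\alpha/(2-\alpha)}}$ in $\eta_t^\alpha$ really is harmless uniformly for $0<t<1$ and $u\ge c|k|\ge c$ (it is, since then the exponent is bounded). A clean way to avoid reproving the Laplace asymptotics from scratch is to invoke the already-quoted estimate \eqref{A1} and \cite[Theorem 3.1]{Sto}, valid for $q\ge2$, and note that for $q=1$ the analogue of $\phi_0(k)\sim(k+1)q^{-k/2}$ degenerates to $\phi_0(k)\sim 1$ (constant), which is exactly why the tree estimate $P_t^\alpha(k)\sim k^{-1-\alpha/2}tq^{-k}$ becomes $P_t^\alpha(k)\sim t|k|^{-1-\alpha/2}$ in the relevant range — but since the statement claims the exponent $-1-\alpha$, I would double-check against \eqref{Eq51} directly rather than by analogy, as the $\mathbb{Z}$ heat kernel's Gaussian-type decay genuinely changes the exponent compared with the $q\ge2$ case. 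Once the $u\gtrsim|k|$ piece and the negligibility of the $u\lesssim|k|$ piece are in hand, combining them with the $k=0$ computation yields $\mathbb{P}_t^\alpha\sim P_t^\alpha$ uniformly for $0<t<1$, completing the proof.
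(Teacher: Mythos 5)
Your overall frame (subordination formula plus the kernel estimate \eqref{Eq51}, treat $k=0$ separately, split the $k\neq 0$ integral into $u\lesssim |k|$ and $u\gtrsim|k|$) matches the paper's, and your $k=0$ argument and your treatment of the region $u\lesssim|k|$ (super-exponential smallness via $\Phi(|k|/u)\gtrsim(|k|/u)\log(|k|/u)$) are essentially what the paper does. But the core of your analysis on $u\gtrsim|k|$ is based on a wrong picture of the phase. The function $u\mapsto u\bigl(1+\Phi(|k|/u)\bigr)$ is \emph{decreasing} in $u$ (indeed $\frac{d}{dz}\frac{1+\Phi(z)}{z}=\frac{\sqrt{1+z^2}-1}{z^2}\ge0$ with $z=|k|/u$), and by \eqref{Eq53} it behaves like $k^2/(2u)\to0$ as $u\to\infty$; there is no ``$e^{-u}$ decay forcing $u$ near $|k|$'' and no interior saddle. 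The integral is governed by the Gaussian regime $u\gtrsim k^2$, where the exponential is harmless and $\int_{k^2}^\infty t\,u^{-3/2-\alpha/2}\,du\sim t|k|^{-1-\alpha}$; localization at $u\sim|k|$ would instead produce $t|k|^{-3/2-\alpha/2}$ (times an exponentially small factor), which only matches $t|k|^{-1-\alpha}$ when $\alpha=1$.

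This flipped picture makes your lower-bound step fail outright: on $u\in[|k|,2|k|]$ one has $1+\Phi(|k|/u)\ge c>0$, so $H_u^{\mathbb{Z}}(k)\lesssim|k|^{-1/2}e^{-c|k|}$ there, and restricting the integrand to that interval only gives $P_t^\alpha(k)\gtrsim t|k|^{-(3+\alpha)/2}e^{-c|k|}$, exponentially smaller than the required $t|k|^{-1-\alpha}$. The restriction must instead be to $u\gtrsim k^2$, which is exactly what the paper does: it uses $\eta_t^\alpha(s)\ge Cts^{-1-\alpha/2}$ for $s>s_0t^{2/\alpha}$ together with the Bessel lower bound $I_k(s)\ge Ce^s/\sqrt{s}$ valid for $s\gtrsim k^2$, and integrates over $s\ge k^2/2$ (handling the finitely many values $0<|k|\le k_0$ by a separate elementary Bessel estimate). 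For the upper bound on $u\gtrsim|k|$ the paper likewise avoids any saddle-point analysis by simply inserting the Gaussian bound $H_s^{\mathbb{Z}}(k)\lesssim s^{-1/2}e^{-ck^2/(2s)}$ and $\eta_t^\alpha(s)\le Cts^{-1-\alpha/2}$, and substituting to get $Ct|k|^{-1-\alpha}$. So: your target exponents and your splitting are right, but the localization mechanism and the lower-bound interval must be corrected from $u\sim|k|$ to $u\sim k^2$ before the argument is sound.
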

\begin{proof}
We choose $\beta>1$ and $\alpha/2<\gamma<1$ such that
$\Phi(z)>\gamma z\log z,$ $z>\beta$. We can write
\begin{equation*}
\begin{split}
P_t^\alpha(k)&=\int_0^{|k|/\beta}\eta_t^\alpha(s)H_s^Z(k)ds+\int_{|k|/\beta}^\infty\eta_t^\alpha(s)H^Z_s(k)ds\\
&=I_1^\alpha(t,k)+I_2^\alpha(t,k),\quad t>0,\quad\text{ and }\quad k\in\mathbb{Z}.
\end{split}
\end{equation*}

By using \eqref{Eq51} and \eqref{Eq53} and by applying [(9)
\cite{BSS}] we obtain
\begin{equation*}
\begin{split}
I_2^\alpha(t,k)&\leq C\int_{|k|/\beta}^\infty\eta_t^\alpha(s)\frac{e^{ck^2/2s}}{\sqrt{s}}ds\leq Ct\int_{|k|/\beta}^\infty\frac{e^{-ck^2/2s}}{s^{(\alpha+3)/2}}ds\\
&\leq C \frac{t}{|k|^{\alpha+1}}, \quad t>0,\quad k\in
\mathbb{Z}\setminus\{0\}.
\end{split}
\end{equation*}
On the other hand using once again \eqref{Eq51}, \eqref{Eq53} and
\cite[(9)]{BSS} and by writing $\gamma=a+b$, where $a>\alpha/2$ and
$b>0,$ we get
\begin{equation*}
\begin{split}
I_1^\alpha(t,k)&\leq C\int_0^{|k|/\beta}\eta_t^\alpha(s)\frac{e^{-\gamma|k|\log(|k|/s)}}{\sqrt{|k|}}ds\\
&\leq Ce^{-b|k|\log\beta}\frac{t}{\sqrt{|k|}}\int_0^{|k|/\beta}\frac{1}{s^{1+\alpha/2}}\left(\frac{|k|}{s}\right)^{-a|k|}ds\\
&\leq Ce^{-b|k|\log\beta}\frac{t}{|k|^{1/2+a|k|}}\int_0^{|k|/\beta}s^{a|k|-1-\alpha/2}ds\\
&\leq
Ce^{-b|k|\log\beta}\frac{t}{|k|^{1/2+a|k|}}\left(\frac{|k|}{\beta}\right)^{a|k|-\alpha/2}\leq
C\frac{t}{|k|^{1+\alpha}}, \quad t>0,\quad k\in
\mathbb{Z}\setminus\{0\}.
\end{split}
\end{equation*}
Thus we have proved $P_t^\alpha(k)\leq C \mathbb{P}_t^\alpha(k)$,
$k\in \mathbb{Z}\setminus \{0\}$. To see $P_t^\alpha\geq C
\mathbb{P}_t^\alpha$, we have that there exists $s_0=s_0(\alpha)>0$
such that the following expression holds (see \cite[(10)]{BSS})
\begin{equation}\label{EqA1}
\eta_t^\alpha(s)\geq Cts^{-1-\alpha/2},\quad t>0,\quad\text{ and }\quad s>s_0t^{2/\alpha},
\end{equation}
and by \cite[(7)]{Sto}, for every $a\in(0,1)$ there exists $C>0$
such that
\begin{equation}\label{EqA2}
I_\nu(z)\geq C e^z/z^{1/2},\quad z>\max\{1,a\nu^2\},\quad \nu\geq1,
\end{equation}
where as above $I_\nu$ is the modified Bessel function of the first kind and order $\nu$.\\

By combining \eqref{EqA1} and \eqref{EqA2} we deduce:
\begin{equation*}
\begin{split}
P_t^\alpha(k)&\geq C\int_{k^2/2}^\infty ts^{-1-\alpha/2}e^{-s}\frac{e^s}{s^{1/2}}ds\\
&=Ct\int_{k^2/2}^\infty
s^{-(\alpha+3)/2}ds=C\frac{t}{|k|^{\alpha+1}},\quad t\in(0,1),\quad
k\in \mathbb{Z},\quad |k|\geq\max\{\sqrt{2s_0},\sqrt{2}\}:=k_0.
\end{split}
\end{equation*}
Using \cite[(6)]{Sto} we get
\begin{equation*}
\begin{split}
P_t^\alpha(k)&\geq C\int_{s_0}^\infty ts^{-1-\alpha/2}e^{-s}\frac{e^{\sqrt{s^2+k^2}}}{\sqrt{s+|k|}}\left(\frac{s}{|k|+\sqrt{k^2+s^2}}\right)^|k|ds\\
&\geq Ct\int_{s_0}^\infty s^{-(\alpha+3)/2}\left(\frac{s}{k_0+\sqrt{k_0^2+s^2}}\right)^{k_0}ds\\
&\geq Ct\int_{s_0}^\infty s^{-(\alpha+3)/2}ds=Ct\\
&\geq C\frac{t}{|k|^{\alpha+1}},\quad t\in(0,1),\quad k\in
\mathbb{Z},\quad 0<|k|\leq k_0.
\end{split}
\end{equation*}
Thus we have $P_t^\alpha(k)\sim \mathbb{P}_t^\alpha(k)$, $t\in (0,1)$ and $k\in\mathbb{Z}\setminus\{0\}$.\\
Finally for $k=0$, on one hand, by using \cite[(8)]{BSS} we obtain
\begin{equation*}
\begin{split}
P_t^\alpha(0)&=\int_0^\infty\eta_t(s)h^Z_s(0)ds= t^{-2/\alpha}\int_0^\infty\eta_1(t^{-2/\alpha}s)e^{-2s}I_0(2s)ds\\
&\geq Ct^{-\alpha/2}\int_0^1\eta_1(t^{-2/\alpha}s)ds=C\int_0^{t^{-\alpha/2}}\eta_1(u)du\\
&\geq C\int_0^1\eta_1(u)du,\quad t\in (0,1),
\end{split}
\end{equation*}
and, on the other hand, from \eqref{Eq51} it follows that
\begin{equation*}
\begin{split}
I_2^\alpha(t,0)\leq C\int_0^\infty\eta_t^\alpha(s)ds\leq C,\quad t\in(0,1).
\end{split}
\end{equation*}
We conclude that $P_t^\alpha(0)\sim 1$, $t\in (0,1)$.

\end{proof}

\subsection{Proof of Theorem \ref{Teo1} for $q=1$.}
\bigskip

Properties $a)\Rightarrow b)\Rightarrow c)\Rightarrow d)$ can be proved as in Theorem \ref{Teo1}.
\bigskip
Suppose that $d)$ holds. Let $0\leq f\in l^p(\mathbb{Z},\mu,u)$. We
choose $n_0\in\mathbb{Z}$ such that there exists
$$\lim_{t\to0^+}P^\alpha_t(f)(n_0).$$
 Then, we can find $R_1\in(0,1)$ such that
$$\sup_{0<t\leq R_1}P^{\alpha}_t(f)(n_0)<\infty.$$
We define
$$
\mathbb{P}_t^\alpha(f)(k)=\sum_{n\in \mathbb{Z}}\mathbb{P}_t^\alpha(k-n)f(n),\quad k\in \mathbb{Z}.
$$
Since by Proposition \ref{Prop2} $\mathbb{P}_t^\alpha(k)\sim
P^\alpha_t(k)$, $k\in\mathbb{Z}$ and $t\in (0,1)$, we have that
$\mathbb{P}_{R_1}^\alpha(f)(n_0)<\infty.$ We also have that
$$P^\alpha_t(f)(n_0)\leq C\mathbb{P}_t^\alpha(f)(n_0)\leq \frac{C}{R_1}\mathbb{P}_{R_1}^\alpha(f)(n_0),\quad t\in(R_1,1).$$

Hence, $\sup_{0<t<1}P_t^\alpha(f)(n_0)<\infty$. To see that  $e)$ is true, we need to prove that for any $n\in\mathbb{Z}\setminus\{n_0\}$, $\sup_{0<t<1}P_t^\alpha(f)(n)<\infty$. Thus, let $n_1\in\mathbb{Z}\setminus\{n_0\}$. We can write
\begin{equation*}
\Big|\frac{n_0-k}{n_1-k}\Big|\leq \frac{|n_0-n_1|+|n_1-k|}{|n_1-k|}\leq|n_0-n_1|+1,\quad k\neq n_1.
\end{equation*}
Then,
\begin{equation*}
\begin{split}
P_t^\alpha(f)(n_1)&\leq C\mathbb{P}_t^\alpha(f)(n_1)\\
&=C\left(f(n_1)+tf(n_0)|n_0-n_1|^{-1-\alpha}+\sum_{k\in\mathbb{Z}\setminus\{n_0,n_1\}}\Big(\frac{|n_0-k|}{|n_1-k|}\Big)^{1+\alpha}\mathbb{P}_t^\alpha(n_0-k)f(k)\right)\\
&\leq C\Big( f(n_1)+t\frac{f(n_0)}{|n_0-n_1|^{1+\alpha}}+(|n_0-n_1|+1)^{1+\alpha}\mathbb{P}_t^\alpha(f)(n_0)\Big)\\
&\leq C\Big(
f(n_1)+f(n_0)+(|n_0-n_1|+1)^{1+\alpha}P_t^\alpha(f)(n_0)\Big),\quad
t\in(0,1).
\end{split}
\end{equation*}
Hence $\sup_{t\in(0,1)}P_t^\alpha(f)(n_1)<\infty$ and the property $e)$ is proved.

The property $e)\Rightarrow f)$ is clear.

Suppose that $f)$ holds with $R\in(0,1)$. Then, by using  H\"older's
inequality and duality arguments we obtain when $1<p<\infty$ that
$$\sum_{k\in\mathbb{Z}}P_R^\alpha(|n-k|)^{p'}u(k)^{-p'/p}<\infty,\quad n\in\mathbb{Z},$$
and, when $p=1$, that
$$
\sup_{k\in \mathbb{Z}}\frac{P_R^\alpha(|n-k|)}{u(k)}<\infty,\quad
n\in \mathbb{Z}.
$$ Then, when $1<p<\infty$,
$$\sum_{k\in\mathbb{Z}}(1+|n-k|)^{-(1+\alpha)p'}u(k)^{-p'/p}<\infty,\quad n\in\mathbb{Z},$$
and, when $p=1$,
$$
\sup_{k\in
\mathbb{Z}}\frac{1}{(1+|n-k|)^{1+\alpha}u(k)}<\infty,\quad n\in
\mathbb{Z}.
$$
Thus $g)$ is proved.\\

The property $g)\Rightarrow h)$ is clear.\\

Assume now that $h)$ is true. We observe that
\begin{equation*}
\begin{split}
\frac{1}{1+|n-k|}&=\frac{1+|k|}{1+|n-k|}\frac{1}{1+|k|}\leq\frac{1+|n-k|+|n|}{1+|n-k|}\frac{1}{1+|k|}\\
&\leq\frac{1+|n|}{1+|k|},\quad n,k\in\mathbb{Z}.
\end{split}
\end{equation*}
Let $f:\mathbb{Z}\to[0,\infty)$. By proceeding as above it follows
that
\begin{equation*}
\begin{split}
P_t^\alpha(f)(n)&\leq C\mathbb{P}_t^\alpha(f)(n)\leq C\sum_{k\in\mathbb{Z}}(1+|n-k|)^{-(1+\alpha)}f(k)\\
&\leq  C(1+|n|)^{1+\alpha}\left(\sum_{k\in\mathbb{Z}}(1+|k|)^{-(1+\alpha)p'}u(k)^{-p'/p}\right)^{1/p'}\left(\sum_{k\in\mathbb{Z}}f(k)^pu(k)\right)^{1/p},
\end{split}
\end{equation*}
with $t\in(0,1)$. Then, if $v$ is a weight on $\mathbb{Z}$ such that
$$\sum_{k\in\mathbb{Z}}(1+|k|)^{-(1+\alpha)p}v(k)<\infty,$$
$P_{*,t}^\alpha$  is bounded from $l^p(\mathbb{Z},u)$ into $l^p(\mathbb{Z},v)$.

\subsection{Proof of Theorem \ref{Teo2} for $q=1$.} We observe that
\begin{equation*}
\begin{split}
t^{2\nu}\frac{e^{-t^2/4u}}{u^{\nu+1}}&=\frac{t^{2\nu-1}}{u^{\nu-1/2}}\frac{te^{-t^2/4u}}{u^{3/2}}=2\sqrt{\pi}\frac{t^{2\nu-1}}{u^{\nu-1/2}}\eta^{1/2}_t(u),\quad
t,u\in(0,\infty).
\end{split}
\end{equation*}
Thus, by arguing as in the proof of Proposition \ref{Prop2}, we get
$$T_t^\nu(k)\sim\frac{t^{2\nu}}{|k|^{2\nu+1}},\quad t\in (0,1)\quad\text{ and }\quad k\in\mathbb{Z}\setminus\{0\}.$$
By using the last equivalence,  Theorem \ref{Teo2} can be proved as
Theorem \ref{Teo1}
\subsection{Proof of Theorem \ref{Teo3} for $q=1$.} By using \cite[Proposition 5.1]{CMS}
 and by taking into account that
$$\sup_{t\in (0,1]}W_t^Z(f)\sim W_1^Z(f),$$
provided that $f:\mathbb{Z}\to[0,+\infty)$, we deduce that
$$W_{*,1}^Z(f)=\sup_{t\in (0,1]}|W_t^Z(f)|\leq CM(f).$$
Here $M(f)$ denotes the Hardy-Littlewood maximal function. Since
$(\mathbb{Z},d,\mu)$ is a space of homogeneous type we have that
$W_{*,1}^Z(f)$ is bounded from $l^p(\mathbb{Z},\mu,w)$ into itself,
provided that $w$ is in the Muckenhoupt class $A^p(\mathbb{Z})$, see
\cite{HMW}. The class of weights in Theorem \ref{Teo3} is wider than
$A^p(\mathbb{Z})$. The proof of  Theorem \ref{Teo3} follows in a
similar way than the one of Theorem \ref{Teo1} for $q\ge 2$.

\bigskip

{\bf Acknowledgements} B. B. was partially supported by Ram\'on y
Cajal fellowship RYC2018-026098-I (Spain) and by grant PID2019-110712GB-I00 from the Spanish Go--vernment. J. B. was partially
supported by grant PID2019-106093GB-I00 from the Spanish Government.
The third author wishes to thank to Professor J. L. Torrea (UAM, Madrid) for
sharing with him some works that show the connection between
convergence of the initial data for evolution equations and the
weight $L^p$ inequalities for the associated maximal semigroup
operators.



\begin{thebibliography}{99}




\bibitem{AdL}
\newblock L. Abad\'ias, M. de Le\'on-Contreras
\newblock {\it Discrete Holder spaces, their characterization via semigroups associated to the discrete Laplacian and kernels estimates},
\newblock arXiv:2107.06778v1.

\bibitem{AGMP}
\newblock L. Abad\'ias, J. Gonz\'alez-Camus, P. J. Miana, J. C. Pozo
\newblock {\it Large time behavior for the heat equation on $\mathbb{Z}$, moments and decay rates},
\newblock J. Math. Anal. Appl. {\bf 500} (2021), 125-137.


\bibitem{ATV1}
\newblock L. Arditti, A. Tabacco, M. Vallarino
\newblock {\it Hardy spaces on weighted homogeneous trees},
\newblock Advances in Microlocal and Time-Frequency Analysis. Applied and Numerical Harmonic Analysis. Birkh\"auser, Cham. https://doi.org/$10.1007/978-3-030-36138-9_2$. (2020) 21-39.


\bibitem{ATV2}
\newblock L. Arditti, A. Tabacco, M. Vallarino
\newblock {\it BMO space on weighted homogeneous trees},
\newblock The Journal of Geom. Analysis. {\bf 31}, (2021), 8832-8849.


\bibitem{BSV}
\newblock E. Berchio, F. Santagati, M. Vallarino
\newblock {\it Poincar\'e and Hardy inequalities on homogeneous trees},
\newblock To appear in Geometric properties for parabolic and elliptic PDE'S. Springer INdAM Series, vol {\bf 47}. Springer, Cham. (2021)

\bibitem{BSS}
\newblock K. Bogdan, A. Stos, P. Sztonyk
\newblock {\it Harnack inequality for stable processes on d-sets},
\newblock Studia Math. {\bf 158}(2), (2003), 163-198.


\bibitem{CM}
\newblock D. Celotto, S. Meda
\newblock {\it On the analogue of the Fefferman- Stein theorem on graphs with the Cheeger property},
\newblock Annali di Matematica {\bf 197}, (2018), 1637-1677.


\bibitem{CMW}
\newblock D. Celotto, S. Meda, B. Wrobel
\newblock {\it $L^p$-spherical multipliers on homogeneous trees},
\newblock Studia Math. {\bf 247} (2019), 175-190.


\bibitem{CGRTV}
\newblock O. Ciaurri, T. A. Gillespie, L. Roncal, J. L. Torrea, J. L. Varona
\newblock {\it Harmonic analysis associated with a discrete Laplacian},
\newblock J. Anal. Math. {\bf 132} (2017), 109-131.


\bibitem{CRSTV}
\newblock O. Ciaurri, L. Roncal, P.R. Stinga, J. L. Torrea, J. L. Varona
\newblock {\it Nonlocal discrete diffussion equations and the fractional discrete Laplacian, regularity and applications},
\newblock Adv. Math. {\bf 330} (2018), 688-738.


\bibitem{CW}
\newblock R. Coifman, G. Weiss
\newblock {\it Analyse harmonique non-conmutative sur certains espaces homogenes},
\newblock Lecture Notes in Math. book series (LNM, vol.{\bf 242}). Springer-Verlag Berlin Heidelberg (1971).



\bibitem{CCS}
\newblock J. M. Cohen, F. Colonno, D. Singman
\newblock {\it Carleson measures on a homogeneous tree},
\newblock J. of Math. Analysis and Applications {\bf 395}(1), 403-412.

\bibitem{CMS}
\newblock M. G. Cowling, S. Meda, G. Setti.
\newblock {\it Estimates for functions of the Laplace operator on homogeneous trees},
\newblock Trans. Amer. Math. Soc., {\bf 350} (9), (2000), 4271-4293.

\bibitem{CMS2}
\newblock M. G. Cowling, S. Meda, G. Setti.
\newblock {\it A weak type $(1,1)$ estimate for a maximal operator on a group of isometries of homogeneous trees},
\newblock Colloquium Mathematicum {\bf 118} (1), (2010), 223-232.


\bibitem{FJ}
\newblock A- Fern\'andez-Bertol\'in, P. Jaming.
\newblock {\it Uniqueness for solutions of the Schr\"odinger equation on trees},
\newblock Annali. Mat. Pure Appl. {\bf 199}, (2020), 681- 708.



\bibitem{GHSTV}
\newblock G. Garrig\'os, S. I. Hartzstein, J. L. Torrea, , T. Signes, B. E. Viviani.
\newblock {\it Pointwise convergence to initial data of heat and Laplace equations},
\newblock Transactions of the American Math. Society {\bf 368}(9), (2016), 6575-6600.


\bibitem{GJK}
\newblock G. Chinta, J. Jorgenson, A. Karlsson
\newblock {\it Heat kernels on regular graphs and generalized Ihara zeta
function formulas},
\newblock Monatshefte für Mathematik {\bf 178} (2013), 171-190.

\bibitem{GR}
\newblock A. Ghosh and E. Rela
\newblock {\it Weighted inequalities for fractional maximal functions on the infinite rooted k-ary trees},
\newblock arXiv:2112.05494.

\bibitem{GKLW}
\newblock J. Gonz\'alez-Camus, V. Keyantuo, C. Lizama, M. Warma
\newblock {\it Fundamental solutions for discrete dynamical systems involving the fractional Laplacian},
\newblock  Math. Methods. Appl. Sci. (2019), 1-24.

\bibitem{Gr}
\newblock P. Graczyk and A. Stos
\newblock {\it Transition density estimates for stable processes on symmetric spaces},
\newblock Pacific J. Math. {\bf 217}(1), (2004), 87-100.



\bibitem{HTV}
\newblock S. I. Hartzstein, J. L. Torrea, B. E. Viviani.
\newblock {\it A note on the convergence to initial data of heat and Poisson equations},
\newblock Proc. of the American Math. Society, {\bf 141}, No. 4 (2013), 1323-1333.



\bibitem{HS}
\newblock W. Hebisch, T. Steger
\newblock {\it Multipliers and singular integrals on exponential growth groups},
\newblock Mathematische Zeitschrift {\bf 245}(1), (2003), 37-61.


\bibitem{HMW}
\newblock R. Hunt, B. Muckenhoupt, R. Wheeden
\newblock{\it Wighted norm inequalities for the conjugate function and Hilbert transform}
\newblock Transactions of the American Msthematical Society {\bf 176} (Feb 1973), 227-251.


\bibitem{KR}
\newblock P. Kumar, S. K. Rano
\newblock {\it Analysis of the $L^p$-type estimates of Poisson transform on homogeneous trees},
\newblock  arXiv:1808.09659v1 (2018).

\bibitem{LSTV1}
\newblock M. Levi, F. Santagati, A. Tabacco, M. Vallarino
\newblock {\it Riesz transform for a flow Laplacian on homogeneous trees},
\newblock arXiv:2107.06620v1 (2021).

\bibitem{LSTV2}
\newblock M. Levi, F. Santagati, A. Tabacco, M. Vallarino
\newblock {\it Analysis on trees with nondoubling flow measures},
\newblock Potential Anal (2021). https://doi.org/$10.1007/s11118-021-09957-6$.

\bibitem{LR}
\newblock C. Lizama, L. Roncal
\newblock {\it H\"older-Lebesgue regularity and almost periodicity for semidiscrete equations with a fractional Laplacian},
\newblock Discrete Contin. Dyn. Syst.  {\bf 38}(3), (2018) 1365-1403.



\bibitem{NT}
\newblock A. Naor, T. Tao.
\newblock {\it Random martingales and localization of maximal inequalities},
\newblock J. Functional Analysis, {\bf 259} (3)  2010, 731-779


\bibitem{ORS}
\newblock S. Ombrosi, I. Rivera-R\'ios, M. D. Safe.
\newblock {\it Fefferman-Stein inequelities for the Hardy-Littlewood maximal function on the infinite rooted $K$-any tree},
\newblock Intervat. Math. Res. Notices  2021({\bf 1}), 2736-2762.


\bibitem{Se}
\newblock A. G. Setti.
\newblock {\it $L^p-L^r$ estimates for the Poisson semigroup on homogeneous trees}.
\newblock J. Austral. Math. Soc (Series A), {\bf 64}, (1998), 20-32.


\bibitem{ST}
\newblock P.R Stinga, J.L. Torrea.
\newblock {\it Extension problem and Harnack's 
inequality for some fractional operators}
\newblock Comm. P.D.E., {\bf 35} (11) (2010), 2092-2122.

\bibitem{Sto}
\newblock A. Stos
\newblock {\it Stable semigroups on homogeneous trees and hyperbolic spaces}.
\newblock Illinois J. Math. {\bf 55}(4), (2011). 1437-1454.

\bibitem{Yo}
\newblock K. Yosida.
\newblock {\it Functional Analysis}.
\newblock Springer, Berlin, 1980.


\end{thebibliography}
\end{document}